\newtheorem{theorem}{Theorem}[section]
\numberwithin{equation}{section}
\begin{document}
\makeatletter
\def\ps@pprintTitle{%
 \let\@oddhead\@empty
 \let\@evenhead\@empty
 \let\@oddfoot\@empty
 \let\@evenfoot\@empty
}
\makeatother

\begin{frontmatter}

\title{{\bf Explicit Runge-Kutta Methods with  Multiquadric and Inverse Multiquadric Radial Basis Functions }}

\author[IIPE]{Shipra Mahata}
\ead{shipramahata.maths@iipe.ac.in}
\author[IIPE]{Samala Rathan*}
\ead{Corresponding author: Samala Rathan, Email: rathans.math@iipe.ac.in}
\address[IIPE]{Department of Humanities and Sciences, Indian Institute of Petroleum and Energy (IIPE) Visakhapatnam, Andhra Pradesh, 530003, India.}

\begin{abstract}
In this article, a family of two- and three-stage explicit multiquadric (MQ) and inverse multiquadric (IMQ) radial basis functions (RBFs) Runge–Kutta methods are introduced for solving ordinary differential equations. These methods are developed by utilizing MQ- and IMQ-RBF Euler methods. The main advantage of these RBF-based methods lies in their ability to achieve a one-order higher accuracy than their classical Runge-Kutta counterparts without increasing the number of stages. This improvement is made possible by incorporating RBF corrections, where the optimal shape parameter is determined through the local truncation error analysis of the proposed schemes. Convergence and stability analyses, including the study of stability regions, are presented to illustrate how these methods compare with standard Runge–Kutta schemes. Numerical experiments on five benchmark problems further confirm predicted accuracy and stability, demonstrating that MQ- and IMQ-based RBF Runge–Kutta methods provide an alternative to conventional low-stage explicit Runge-Kutta schemes.
\end{abstract}
\begin{keyword}
 Initial value problem \sep Runge-Kutta methods \sep Shape parameter \sep Order of accuracy \sep Radial basis functions \sep Stability.\\
\textit{AMS Classification :} 65L06 
\end{keyword}
\end{frontmatter}
\section{Introduction}
Numerical methods for solving initial value problems (IVPs) of the form
\begin{equation}\label{eq:ivp}
u' = f(t, u), \quad u(t_0) = u_0, \quad a \leq t\leq b,
\end{equation} where $u(t) \in C^{\infty}[a,b]$ and $f(t,u)$ is a class of $C^{\infty}$ function has enormous use in physics, biology, engineering, and control theory, etc.  Among the many available techniques, Runge–Kutta (RK) methods are widely used due to their explicit structure, ease of implementation, and high-order accuracy.
The essential idea behind RK methods is to approximate the solution over a single time step by computing a weighted average of several intermediate slope evaluations of the underlying differential equation. These slope evaluations are taken at carefully chosen points between the current state 
$(t_n, u_n)$ and nect state $(t_{n+1}, u_{n+1})$, By combining this information, RK methods construct an improved estimate of the solution trajectory compared to simpler schemes like Euler’s method. 

\par A classical explicit Runge-Kutta method with $r$ stages can, in general, achieve accuracy of order $r$ when $r \leq 4$ \cite{book1}. However, for $r \geq 5$, there arises a well-known \emph{order barrier}: the relationship between the number of stages and the achievable order of accuracy becomes nonlinear. In particular, beyond fourth order, simply increasing the number of stages within a polynomial-based framework does not automatically yield a corresponding increase in accuracy. The limitation motivates the exploration of alternative formulations and basis functions in the design of higher-order integrators. For implicit Runge-Kutta methods, a $r-$ stage scheme can reach order $2r,$ that goes beyond the order limitations of explicit methods, though at the expense of solving nonlinear systems at each step. Over time, various specialized Runge-Kutta schemes have been developed for different applications: exponential-fitted schemes for oscillatory problems \cite{vanden1999exponentially,simos2001fourth,franco2004exponentially}, symplectic schemes for Hamiltonian systems \cite{sanz1992symplectic,hairer2006structure,mei2017symplectic}, exponential schemes for stiff systems \cite{hochbruck2005explicit,hochbruck2005exponential,hochbruck2010exponential}, total variation diminishing and strong stability preserving schemes for hyperbolic conservation laws \cite{shu1988efficient,gottlieb1998total,gottlieb2001strong,bresten2017explicit}, and implicit–explicit schemes for problems involving both stiff and non-stiff components \cite{pareschi2005implicit,boscarino2016high,SB}. These variants demonstrate the flexibility and broad applicability of the Runge–Kutta framework.

The primary objective of this work is to explore the role of radial basis functions (RBFs) in the construction of accurate time discretization schemes for the numerical solution of differential equations. To set the stage, we first review relevant contributions from the existing literature. In \cite{Jung,Gu}, adaptive RBF-based solvers were introduced for solving initial value problems (IVPs). These solvers reformulated classical time-stepping methods—such as Euler’s method, the midpoint method, and the Adams family of methods—by replacing the traditional polynomial bases with radial basis functions, specifically Multi-Quadratic (MQ) and Gaussian RBFs. The distinguishing feature of such adaptive RBF solvers lies in the presence of a free shape parameter embedded in the basis functions. This parameter is not fixed but can be adaptively tuned according to the local smoothness of the solution, thereby enhancing accuracy and stability. A rigorous strategy for selecting this parameter involves using derivative information to eliminate the leading terms in the local truncation error expansion, effectively improving the order of accuracy of the method. Building upon this foundation, subsequent works such as \cite{Rathan, rathan2024adaptive} extended these ideas further by designing time discretization schemes based on inverse quadratic and inverse multi-quadratic RBFs, thereby broadening the applicability of RBF-based solvers across a wider range of problems. 

\par The shape parameter $\epsilon$ is central to balancing accuracy and stability in RBF-based methods. Since no universal rule exists for its optimal choice, various strategies based on stability and accuracy have been explored \cite{fasshauer2007choosing,fornberg2004some,schaback1995error}, including the use of multiple shape parameters to further improve accuracy. The use of shape parameters to suppress numerical oscillations or errors is not limited to RBF-based ODE solvers. Similar ideas appear in error inhibiting schemes \cite{gu2020adaptive}, Essentially Non-Oscillatory (ENO) and Weighted Essentially Non-Oscillatory (WENO) \cite{guo2017radial,guo2017rbf} methods for solving hyperbolic PDEs, where the reconstruction uses adaptive criteria to control local behavior of the solution. To advance the construction of higher-order time discretization schemes, Gu et al.~\cite{Jiaxi} introduced Runge–Kutta (RK) methods augmented with Gaussian radial basis functions (RBFs). In this framework, RBFs are incorporated into the stage equations of RK methods to enhance their accuracy without increasing the number of stages. The resulting RBF-embedded RK schemes generalize the classical formulation by introducing a free shape parameter at each stage, motivated by the Gaussian RBF modification of Euler’s method \cite{Gu}. The central idea is to systematically reduce the local truncation error by selecting the shape parameter such that the dominant error term in the expansion is canceled, thereby raising the effective order of accuracy by one. The determination of this parameter depends on the local smoothness of the underlying solution, which guides its adaptive adjustment during the integration process.

In this work, we propose a family of two- and three-stage explicit multiquadric (MQ) and inverse multiquadric (IMQ) radial basis function  RK methods for solving ordinary differential equations. These methods are developed by extending MQ- and IMQ-based RBF Euler methods, and are designed to achieve one order higher accuracy than classical RK methods without increasing the number of stages. This enhancement is made possible by incorporating RBF corrections, where the shape parameters are analytically selected to eliminate leading terms in the local truncation error expansion, offering a systematic approach to constructing high-order RBF-based schemes. Unlike prior works \cite{fasshauer2007choosing,fornberg2004some,  schaback1995error} that rely on empirical or numerically optimized shape parameters, this approach determines optimal values through rigorous local error analysis. The proposed framework generalizes beyond Gaussian RBFs, covering both MQ and IMQ variants, and includes detailed convergence and stability analyses such as the study of stability regions to assess the performance relative to standard RK schemes. Numerical experiments on five benchmark problems confirm the predicted accuracy and stability, demonstrating that these RBF-based methods not only match but often exceed the performance of conventional low-stage explicit RK methods. Notably, classical polynomial-based RK methods emerge as special cases of this broader RBF-based formulation, ensuring that the new methods maintain at least the same order of convergence while offering enhanced flexibility and accuracy.



The manuscript is organized as follows; Section 2 reviews classical 
$r$ stage Runge–Kutta methods of order $r\leq 3$ laying the groundwork for comparison with the proposed methods. Section 3 introduces the MQ-RBF and IMQ-RBF Runge–Kutta methods, deriving their formulations from the corresponding RBF-modified Euler methods. We perform local truncation error analysis and establish order conditions to ensure that an  $r$ stage method achieves $(r+1)$-th order accuracy through optimal shape parameter selection. Section 4 presents the convergence analysis of the proposed methods, demonstrating their theoretical consistency and accuracy. Section 5 provides numerical experiments on benchmark problems to validate the improved performance and stability of the RBF-based methods. Finally, some conclusions and future works are provided in Section 6.

\section{Explicit Runge-Kutta Methods}
We denote $u_n$ and $v_n$ as the exact and numerical solutions of the IVP \eqref{eq:ivp}, respectively. The $r$-stage Runge-Kutta methods \cite{book1,iserles2009first} with step length $h$ for \eqref{eq:ivp} can be written as,
\begin{equation}    \label{eq:grk}
v_{n+1} = v_n + h \sum_{i=1}^{r} b_i k_i,
\end{equation} where,
\begin{eqnarray*}
\begin{aligned}
k_1 &= f(t_n, v_n), \\
k_2 &= f(t_n + c_2 h, v_n + h a_{21} k_1), \\
k_3 &= f(t_n + c_3 h, v_n + h (a_{31} k_1 + a_{32} k_2)), \\
&\ \vdots \\
k_r &= f\left(t_n + c_r h, v_n + h \sum_{j=1}^{r-1} a_{rj} k_j\right).
\end{aligned}
\end{eqnarray*}
are slopes at intermediate points $t_n+c_rh$ between $t_n$ to $t_{n+1},$  $a_{rj}$ for $ 1 \leq j <i \leq r$ are coefficients, $b_i$ for $1 \leq i \leq r,$ and $c_r$ for $1 \leq i \leq r.$ The matrix $[a_{rj}]$ is called the Runge-Kutta matrix, $b_i$ are known as weights and $c_r$ are as nodes. Such r-stage explicit Runge-Kutta method can be written in Butcher tableau form \cite{butcher2016numerical} as follows
\begin{table}[h]
    \centering
    \renewcommand{\arraystretch}{1.5}
    \begin{tabular}{c|cccc}
        $c_1=0$  &      &        &        &        \\
        $c_2$   & $a_{21}$  &       &        &        \\
        $c_3$   & $a_{31}$  & $a_{32}$  &       &        \\
        $\vdots$  & $\vdots$  & $\vdots$  & $\ddots$  &        \\
        $c_r$   & $a_{r1}$  & $a_{r2}$  & $\dots$  &  $a_{r(r-1)}$    \\
        \hline
               & $b_1$   & $b_2$   & $\dots$   & $b_r$   
    \end{tabular}
\end{table}
\newline Expanding (\ref{eq:grk}) with Taylor's series and equating the same order coefficients of h for both sides, we get consistency conditions 
\begin{equation}    \label{eq:sumb}
\sum_{j=1}^{r}{b_j} = 1,
\end{equation}
and to get required higher-order accuracy for this work, we assume that it satisfies the following condition;
\begin{equation}    \label{eq:sumbn}
\sum_{j=1}^{i-1}{a_{ij}} = c_i, i = 1,...,r.
\end{equation}
\noindent{\textbf{Two-stage second-order methods:}}
The two-stage Runge-Kutta methods can be written as 
\begin{equation}\label{eq:crk_2}   
\begin{cases}
\begin{aligned}
    v_{n+1} &= v_n + h(b_1 k_1 + b_2 k_2), \\
    k_1 &= f(t_n, v_n), \\
    k_2 &= f(t_n + c_2 h, v_n + h a_{21} k_1),
\end{aligned}
\end{cases}
\end{equation}
and the corresponding Butcher tableau is
\[
  \begin{array}{c|cc}
    0 & \\
    c_2 & a_{21} \\
    \hline
      & b_1 & b_2 \\
  \end{array}
\]
Using the Taylor series expansion, the Runge-Kutta method ({\ref{eq:crk_2}}) become second-order accurate if 
\begin{equation}
  b_2 c_2 = \frac{1}{2}, 
  \label{eq:rk_2_con}
\end{equation}
along with the conditions ({\ref{eq:sumb}}) ($b_1+b_2=1$) and ({\ref{eq:sumbn}}) ($a_{21}=c_2$)  converts two-stage Runge-Kutta methods to a one-parameter family of schemes with $c_2 \neq 0$ as
\begin{eqnarray*}
\begin{cases}
\begin{aligned}
v^{(1)} &= v_n + c_2 h f(t_n, v_n), \\
v_{n+1} &= \left(1 - \frac{1}{c_2}\left(1 - \frac{1}{2c_2}\right)\right) v_n 
+ \frac{1}{c_2}\left(1 - \frac{1}{2c_2}\right) v^{(1)} + \frac{1}{2c_2} h f\left(t_n + c_2 h, v^{(1)}\right) .   
\end{aligned}   
\end{cases}    
\end{eqnarray*}
Butcher tableau representation of the same is 
\begin{equation*}
  \begin{array}{c|cc}
    0 & \\
    c_2 & c_2 \\
    \hline
      & 1 - \frac{1}{2c_2} & \frac{1}{2c_2} \\
  \end{array}
\end{equation*}
\noindent{\textbf{Three-stage third-order methods:}} The three-stage Runge-Kutta methods can be expressed in the form
\begin{equation}    \label{eq:crk_3}
\begin{cases}
\begin{aligned}
v_{n+1} &= v_n + h(b_1 k_1 + b_2 k_2 + b_3 k_3), \\
k_1 &= f(t_n, v_n), \\
k_2 &= f(t_n + c_2 h, v_n + h a_{21} k_1), \\
k_3 &= f(t_n + c_3 h, v_n + h(a_{31} k_1 + a_{32} k_2)).
\end{aligned}
\end{cases}
\end{equation}
Butcher tableau representation of this method is given by 
\[
  \begin{array}{c|ccc}
    0 & \\
    c_2 & a_{21} \\
    c_3 & a_{31} & a_{32} \\
    \hline
      & b_1 & b_2 & b_3 \\
  \end{array}
\]
Alternative form of three-stage Runge-Kutta method is
\begin{eqnarray*}
\begin{cases}
\begin{aligned}
  v^{(1)} &= v_n + c_2 h f(t_n, v_n), \\
    v^{(2)} &= \left(1 - \frac{a_{31}}{c_2}\right)v_n + \frac{a_{31}}{c_2}v^{(1)} + a_{32} h f\left(t_n + c_2 h, v^{(1)}\right), \\
    v_{n+1} &= \left(1 - \frac{1}{c_2}\left(b_1 - \frac{a_{31}}{a_{32}}b_2\right) - \frac{b_2}{a_{32}}\right)v_n + \frac{1}{c_2}\left(b_1 - \frac{a_{31}}{a_{32}}b_2\right)v^{(1)} \\
     & \qquad + \frac{b_2}{a_{32}}v^{(2)} + b_3 h f\left(t_n + c_3 h, v^{(2)}\right).    
\end{aligned}    
\end{cases} 
\end{eqnarray*}
The method will be third-order accurate if,
\begin{equation}\label{eq:rk_3_con}
b_2 c_2 + b_3 c_3 = \frac{1}{2},\,\,\, b_2 c_2^2 + b_3 c_3^2 = \frac{1}{3},\,\, a_{32} b_3 c_2 = \frac{1}{6},
\end{equation}
along with the conditions ({\ref{eq:sumb}}) and ({\ref{eq:sumbn}}). The possible families of three-stage third-order Runge-Kutta methods \cite{butcher2016numerical} are given below.\\

\noindent\textbf{I.} \(c_2 \neq 0\),\, \(c_2 \neq \frac{2}{3}\),\, \(c_3 \neq 0\), \, \(c_2 \neq c_3\).
\[
  \begin{array}{c|ccc}
    0 & \\
    c_2 & c_2 \\
    c_3 & \frac{c_3(3c_3 - 3c_2^2 - c_3)}{c_2(2 - 3c_2)} & \frac{c_3(c_3 - c_2)}{c_2(2 - 3c_2)} \\
    \hline
      & \frac{-3c_3 + 6c_2c_3 + 2 - 3c_3}{6c_2c_3} & \frac{3c_3 - 2}{6c_2(c_3 - c_2)} & \frac{2 - 3c_2}{6c_3(c_3 - c_2)} \\
  \end{array}
\]

\noindent\textbf{II.} \(b_3 \neq 0\),\, \(c_2 = c_3 = \frac{2}{3}\).
\[
  \begin{array}{c|ccc}
    0 & \\
    \frac{2}{3} & \frac{2}{3} \\
    \frac{2}{3} & \frac{2}{3} - \frac{1}{4b_3} & \frac{1}{4b_3} \\
    \hline
      & \frac{1}{4} & \frac{3}{4} - b_3 & b_3 \\
  \end{array}
\]

\noindent\textbf{III.} \(b_3 \neq 0\),\, \(c_2 = \frac{2}{3}\),\, \(c_3 = 0 \).
\[
  \begin{array}{c|ccc}
    0 & \\
    \frac{2}{3} & \frac{2}{3} \\
    0 & - \frac{1}{4b_3}  & \frac{1}{4b_3} \\
    \hline
      & \frac{1}{4} - b_3 & \frac{3}{4}  & b_3 \\
  \end{array}
\]
\section{Explicit  RBF Runge-Kutta Methods}
In this section, we present two variants of Runge-Kutta methods with multi-quadratic (MQ) and inverse multi-quadratic (IMQ) radial basis functions based on respective Euler methods.
\begin{eqnarray}\label{Euler}
\begin{aligned}
\textit{MQ-Euler \cite{Jung}:}\,\, v^{n+1} &= \sqrt{(1 + \epsilon_n^2h^2)}\,\,(v_n + hf_n),\\
\textit{IMQ-Euler \cite{Rathan}:}\,\, v^{n+1} &=  \frac{1} {\sqrt{1 + \epsilon_n^2 h^2}}\,v_n+\sqrt{(1 + \epsilon_n^2h^2)} h f_n,
\end{aligned}    
\end{eqnarray}
where the shape parameter $\epsilon \in \mathbb{R} \cup i\mathbb{R}$. Under special conditions, the square root involved in the above methods may be difficult to evaluate, thus the authors presented remedies \cite{Jung,Rathan}. Such methods read as
\begin{eqnarray}\label{Eulermod}
\begin{aligned}
\textit{MQ-modified Euler \cite{Jung}:}\,\, v^{n+1} &= \left(1 + \dfrac{\epsilon_n^2h^2}{2}\right)\left(v_n + hf_n\right),\\
\textit{IMQ-modified Euler \cite{Rathan}:}\,\, v^{n+1} &= \left(1 - \dfrac{\epsilon_n^2h^2}{2}\right)\left(v_n +(1+\epsilon_n^2h^2) hf_n\right),
\end{aligned}    
\end{eqnarray}
Now, we focus on developing mathematical expressions for the MQ and IMQ-RBF based Runge-Kutta schemes with the building block of Euler methods in \eqref{Euler} rather than in \eqref{Eulermod}. We further analyze truncation errors and derive the order of the method according to the consistency conditions.
\subsection{Two-Stage Third-Order Methods}
In this subsection, we develop two-stage third-order MQ and IMQ-RBF based Runge-Kutta schemes and perform the turncation error analysis.\\ \newline 
\noindent{\textit{(a) MQ-RBF RK scheme:}} The two-stage MQ-RBF Runge-Kutta method can be written as
\begin{eqnarray}\label{mq_rk2}
\begin{cases}
\begin{aligned}
v_{n+1} &= v_n + h(b_1 k_1 + b_2 k_2),\\    
k_1 &= f(t_n, v_n), \\
k_2 &= f\left(t_n + c_2 h,(v_n + hk_1a_{21}) \sqrt{1 + \epsilon_n^2 a_{21}^2 h^2} \right) ,
\end{aligned}    
\end{cases}    
\end{eqnarray}
where $\epsilon_n^2$ is a shape parameter and $\epsilon_n^2 = 0$ reduces the method to the classical form of Runge-Kutta method. Alternative form of the \eqref{mq_rk2} is the following.
\begin{eqnarray*}
\begin{cases}
\begin{aligned}
v^{(1)} &= \sqrt{(1 + \epsilon_n^2 a_{21}^2 h^2)}\,\,(v_n + hk_1a_{21}),\\
v_{n+1} &= v_n\left( 1 - \dfrac{b_1}{a_{21}h}\right) 
    + v^{(1)} \dfrac{b_1 }{\sqrt{1 + \epsilon_n^2 a_{21}^2h^2}} + f(t_n + c_2 h, v^{(1)}) b_2 h.
\end{aligned}
\end{cases}
\end{eqnarray*}
Using a Taylor series expansion, the local truncation error is expressed in powers of  $h.$ Specifically, we expand  $k_2$ using the two-variable Taylor series about the point 
$(t_n, u_n),$ while the derivative of  $u_n$ is obtained from the relation \eqref{eq:ivp} satisfied by 
$u_n.$ Assuming $v_n=u_,n$ the resulting expression represents the error produced after a single time step for \eqref{mq_rk2} is
\begin{align*}
\tau  &  =  \frac{u_{n+1}-u_{n}}{h} - (b_1k_1 + b_2k_2 ), \\
& = \dfrac{1}{h}\left (u_n + hu_n' + \frac{h^2}{2}u_n'' + \frac{h^3}{3!}u_n'''+\cdots - u_n \right)\\
& \qquad - \left(b_1f(t_n, v_n) + b_2f\left(t_n + c_2 h,(v_n + hk_1a_{21}) \sqrt{1 + \epsilon_n^2 a_{21}^2 h^2} \right)\right),\\
& =\left(f +\frac{h}{2}(f_t + ff_u) + \frac{h^2}{3!}(f_{tt} + f_{tu}f + f(f_{tu} + ff_{uu}) + f_u(f_t + ff_u))\right)(t_n,u_n)\\
& \qquad -b_1f(t_n,u_n)- b_2\left( f + c_2hf_t + \left( hfa_{21} - \frac{\epsilon_n^2a_{21}^2h^2u_n}{2} - \frac{\epsilon_n^2a_{21}^2h^2b_2c_2h^3}{2} \right)f_u \right)(t_n,u_n)\\
&\qquad+\mathcal{O}(h^3),\\
& =  \left(1 - b_1 - b_2\right) f(t_n,u_n) + h\left[\left( \frac{1}{2} - b_2 c_2 \right) f_t+ \left( \frac{1}{2} - a_{21} b_2 \right) f_u f \right](t_n,u_n) \\
&\qquad + h^2\bigg[\left( \frac{1}{6} - \frac{1}{2} b_2 c_2^2 \right) f_{tt} + \left( \frac{1}{3} - a_{21} b_2 c_2 \right) f_{tu} f + \left( \frac{1}{6} - \frac{1}{2} a_{21}^2 b_2 \right) f^2f_{uu}\bigg](t_n,u_n) \\
&\qquad + h^2\left(\frac{(f_t + f_u f)f_u}{6} - \frac{\epsilon_n^2 a_{21}^2 b_2 u_n f_u}{2}\right)(t_n,u_n)  +\mathcal{O}(h^3).
\end{align*}
The method will maintain second-order accuracy under the consistency and order conditions \eqref{eq:rk_2_con}, ({\ref{eq:sumb}}), and ({\ref{eq:sumbn}}). In addition, the method will be third order accurate if $a_{21} = c_2 = \frac{2}{3}$, $b_1 = \frac{1}{4}$, $ b_2 = \frac{3}{4}$ and 
\begin{equation*}
    \epsilon_n^2 = \dfrac{f_t + ff_u}{u_n},
\end{equation*}
as the truncation error terms vanish upto $\mathcal{O}(h^3).$\\
\noindent{\textit{(b) IMQ-RBF RK scheme:}}  The two-stage IMQ-RBF Runge-Kutta method is of the form
\begin{eqnarray}\label{eq:imq_2}
\begin{cases}
\begin{aligned}
    v_{n+1} &= v_n + h(b_1 k_1 + b_2 k_2),\\
     k_1 &= f(t_n, v_n),\\
     k_2 &= f\left(t_n + c_2 h, \sqrt{(1 + \epsilon_n^2 a_{21}^2 h^2)} h k_1 + \frac{v_n} {\sqrt{1 + \epsilon_n^2 a_{21}^2 h^2}}\right),
\end{aligned}    
\end{cases}    
\end{eqnarray}
where $\epsilon_n^2$ is the shape parameter and making it to zero, then method \eqref{eq:imq_2} leads to the classical RK-2 method.  This two-stage method can be rewritten as
\begin{eqnarray*}
\begin{cases}
\begin{aligned}
v^{(1)} &= v_n \dfrac{1} {\sqrt{1 + \epsilon_n^2 a_{21}^2 h^2 }}+\sqrt{(1 + \epsilon_n^2 a_{21}^2 h^2)}h k_1,\\
v_{n+1} &=v_n\left( 1 - \frac{b_1}{1 + \epsilon_n^2 a_{21}^2h^2}\right) 
    + v^{(1)}\dfrac{b_1 }{\sqrt{1 + \epsilon_n^2 a_{21}^2h^2}} + b_2 h f\left(t_n + c_2 h, v^{(1)}\right).
\end{aligned}   
\end{cases}    
\end{eqnarray*}
Using Taylor's series expansion, we have local truncation error as
\begin{eqnarray*}
\begin{aligned}
\tau  & =  (1 - b_1 - b_2) f(t_n,u_n) + h\left[\left( \frac{1}{2} - b_2 c_2 \right) f_t + \left( \frac{1}{2} - a_{21} b_2 \right) f_u f \right](t_n,u_n) \\
& \qquad + h^2\biggl[\left( \frac{1}{6} - \frac{1}{2} b_2 c_2^2 \right) f_{tt} + \left( \frac{1}{3} - a_{21} b_2 c_2 \right) f_{tu} f \\
&\qquad + \left( \frac{1}{6} - \frac{1}{2} a_{21}^2 b_2 \right) f^2f_{uu} \biggl](t_n,u_n)  + h^2\bigg[(f_t + f_u f)f_u + \frac{\epsilon_n^2 a_{21}^2 b_2 u_n f_u}{2}\bigg](t_n,u_n)  +\mathcal{O}(h^3).  
\end{aligned}
\end{eqnarray*}
The method will maintain second-order accuracy under the consistency and order conditions \eqref{eq:rk_2_con}, ({\ref{eq:sumb}}), and ({\ref{eq:sumbn}}). In addition, the method will be third order accurate if
The method will be third-order accurate if $a_{21}  = c_2 = \frac{2}{3}$, $b_1 = \dfrac{1}{4}$, $ b_2 = \frac{3}{4}$ and $\epsilon_n^2 = - \dfrac{f_t + ff_u}{u_n}.$

In summary, these two-stage third-order methods can be expressed by the augmented Butcher tableau as follows;
\begin{equation}
    \begin{array}{c|cc|c}
        0 &  &  \\
        c_2=\dfrac{2}{3} & a_{21}=\dfrac{2}{3} &  &\textit{MQ:}\,\epsilon_n^2=\dfrac{f_t + ff_u}{u_n} \\
        \hline
        & b_1=\dfrac{1}{4} & b_2=\dfrac{3}{4}
    \end{array}
\end{equation}
\begin{equation}
\begin{array}{c|cc|c}
        0 &  &  \\
        c_2=\dfrac{2}{3} & a_{21}=\dfrac{2}{3} &  &\textit{IMQ:}\,\epsilon_n^2=- \dfrac{f_t + ff_u}{u_n} \\
        \hline
        & b_1=\dfrac{1}{4} & b_2=\dfrac{3}{4}
    \end{array}
\end{equation}
\subsection{Three-stage fourth-order methods}
In this subsection, we develop three-stage fourth-order MQ and IMQ-RBF based Runge-Kutta schemes and perform the turncation error analysis.\\
\newline
\noindent{\textit{(a) MQ-RBF RK scheme:}} The three-stage MQ-RBF Runge-Kutta scheme is of the form
\begin{eqnarray}\label{eq:mq3}
\begin{cases}
\begin{aligned}
v_{n+1} &= v_n + h(b_1 k_1 + b_2 k_2 + b_3 k_3),\\
k_1 &= f(t_n, v_n),  \\
k_2 &= f\left(t_n + c_2 h,(v_n + hk_1a_{21}) \sqrt{1 + \epsilon_{n2}^2 c_{2}^2 h^2}  \right),\\
k_3 &= f\left(t_n + c_3 h,(v_n + h(k_1a_{31} + k_2a_{32})) \sqrt{1 + \epsilon_{n3}^2 a_{21}^2 h^2}  \right). 
\end{aligned}
\end{cases}   
\end{eqnarray}
The method can be written in alternative form as 
\begin{eqnarray*}
\begin{cases}
\begin{aligned}
 v^{(1)} &= \left(v_n + h f a_{21}\right)\sqrt{1 + \epsilon_{n3}^2 c_3^2 h^2}, \\
    v^{(2)} &= v_n \left(1 - \dfrac{a_{31}}{a_{21}}\right)
    + \left[ \dfrac{a_{31} }{a_{21} \sqrt{1 + \epsilon_{n}^{2} c_2^2 h^2}}v^{(1)}
    + h a_{32} f(t_n + c_2 h, v^{(1)}) \right] \sqrt{1 + \epsilon_{n3}^2 c_3^2 h^2}, \\
    v_{n+1} &= v_n \left[ 1 - b_1 h 
    - \frac{b_2}{a_{32} h \sqrt{1 + \epsilon_{n3}^2 c_3^2 h^2}} 
    + \frac{a_{31}}{a_{32}} \right]\\
 & \qquad+ v^{(1)} \left[ \dfrac{b_1}{a_{21} \sqrt{1 + \epsilon_n^2 a_{21}^2 h^2}} 
    - \frac{b_2 a_{31}}{a_{32} \sqrt{1 + \epsilon_n^2 a_{21}^2 h^2}} \right]+ \dfrac{v^{(2)}}{a_{32} h \sqrt{1 + \epsilon_{n3}^2 c_3^2 h^2}}\\ 
  & \qquad \qquad + b_3 f(t_n + c_3 h, v^{(2)}).
\end{aligned}
\end{cases}   
\end{eqnarray*}
Upon Taylor's series expansion for \eqref{eq:mq3}, the local truncation error can be derived as
\begin{align*}
\tau_n &= \frac{u_{n+1}-u_{n}}{h} - (b_1k_1 + b_2k_2 + b_3k_3), \\
&=  (1 - b_1 - b_2 - b_3)f + h\left[ ff_u\left( \frac{1}{2} -a_{21}b_2 -a_{31}b_3 - a_{32}b_3\right) + f_t\left( \frac{1}{2} -b_3c_3- b_2c_2
 \right)\right] \\
& \quad
 + h^2 \left[ f^2f_{uu}\left( \frac{1}{6} - \frac{a_{21}^2b_2}{2} - \frac{a_{31}^2b_3}{2} - a_{31}a_{32}b_3 - \frac{a_{32}^2b_3}{2}\right) + ff_u^2\left(\frac{1}{6} - a_{21}a_{32}b_3\right) + ff_{ut}\left(\frac{1}{3}-a_{21}b_2c_2 \right.\right.\\
 &\quad
 \left. \left.-a_{31}b_3c_3 - a_{32}b_3c_3\right) + f_tf_u\left( \frac{1}{6} - a_{32}b_3c_2\right) +f_{tt}\left( \frac{1}{6}- \frac{b_2c_2^2}{2} - \frac{b_3c_3^2}{2}\right) - \left(b_2c_2^2 \epsilon_{n2}^2 + b_3c_3^2\epsilon_{n3}^2\right)u_nf_u\right]
 \\
 &\quad
 + 
 h^3 \left[ f^3f_{uuu}\left( \frac{1}{24} -\frac{a_{21}^3b_2}{6}-\frac{a_{31}^3b_3}{6} -\frac{a_{31}^2a_{32}b_3}{2} - \frac{a_{31}a_{32}^2b_3}{2} - \frac{a_{32}^3b_3}{6}\right) + f^2f_uf_{uu}\left( \frac{1}{6} - \frac{a_{21}^2a_{32}b_3}{2} \right.\right. \\
 &\quad
 \left. \left. - a_{21}a_{31}a_{32}b_3 - a_{21}a_{32}^2b_3\right) +f^2f_{uut}\left( \frac{1}{8} -\frac{a_{21}^2b_2c_2}{2} -\frac{a_{31}^2b_3c_3}{2} - a_{31}a_{32}b_3c_3 -\frac{a_{32}^2b_3c_3}{2}\right) + ff_tf_{uu}\left( \frac{1}{8} \right. \right.\\
 &\quad
  \left.- a_{31}a_{32}b_3c_2 - a_{32}^2b_3c_2\right) + ff_{utt}\left(\frac{1}{8} - \frac{a_{31}b_3c_3^2}{2} - \frac{a_{32}b_3c_3^2}{2}\right) + f_{tt}f_u \left(\frac{1}{24} - \frac{a_{32}b_3c_2^2}{2} \right)- f_tf_{ut}\left( \frac{1}{8} \right.
 \\
 &\quad\left.\left. - a_{32}b_3c_2c_3\right)+ f_{ttt}\left( \frac{1}{24} -\frac{b_2c_2^3}{6} - \frac{b_3c_3^3}{6}\right) - \frac{a_{32}b_3c_2^2u_nf_u^2\epsilon_{2n}^2}{2} - \frac{a_{31}b_3c_3^2\epsilon_{3n}^2ff_u}{2} -\frac{a_{31}b_3c_3^2\epsilon_{3n}^2u_nff_u}{2}\right. \\
 & \quad \left.-\frac{a_{32}b_3c_3^2\epsilon_{3n}^2u_nff_{uu}}{2}  - \frac{a_{32}b_3 c_3^2ff_{u}\epsilon_{3n}^2}{2}-\frac{b_2c_2^3u_n\epsilon_{2n}^2f_{ut}}{2} -\frac{b_3c_3^3u_n\epsilon_{3n}^2f_{ut}}{2}
 \right]+\mathcal{O}(h^4).
\end{align*}
As the conditions  \eqref{eq:sumb}, \eqref{eq:rk_3_con} and 
\begin{equation*}
    b_2c_2^2\epsilon_{n2}^2 + b_3c_3^2\epsilon_{n3}^2 = 0,
 \end{equation*}
 are imposed the above local truncation error term vanishes. There are three cases where different parameter values makes the method forth-order accurate.
 \begin{enumerate}[label=\Roman*.]
\item The coefficient of $h^3$ become zero when
\begin{eqnarray*}
\begin{aligned}
& a_{21} = c_2 = \frac{1}{3}, 
a_{31} = -\frac{5}{12}, 
a_{32} = \frac{5}{4}, 
c_3 = \frac{5}{6}, 
b_1 = \frac{1}{10}, 
b_2 = \frac{1}{2}, 
b_3 = \frac{2}{5}.\\
& \epsilon^2_{n2} =\frac{
f f_t f_{uu}
- 3 f f_u^3
- f f_u f_{tu}
- 3 f_t f_u^2
+ f_t f_{tu}
- f_{tt} f_u
}{z
v f f_{uu}
- 2v f_u^2
+ v f_{tu}
+ f f_u
}\\
& \epsilon^2_{n3} = -\frac{1}{5}\epsilon^2_{n2}.
\end{aligned}
\end{eqnarray*}

\item  The coefficient of $h^3$ become zero when 
\begin{eqnarray*}
\begin{aligned}
    & a_{21} = c_2 = 1, 
a_{31} = \frac{1}{4}, 
a_{32} = \frac{1}{4}, 
c_3 = \frac{1}{2}, 
b_1 = \frac{1}{6}, b_2 = \frac{1}{6}, 
b_3 = \frac{2}{3}.\\
& \epsilon^2_{n2} =
\frac{
f f_t f_{uu}
+ f f_u^3
- f f_u f_{tu}
+ f_t f_u^2
+ f_t f_{tu}
- f_{tt} f_u
}{
v f f_{uu}
+ 2v f_u^2
+ v f_{tu}
+ f f_u
}\\
& \epsilon^2_{n3} = -\epsilon^2_{n2}.
\end{aligned}    
\end{eqnarray*}

\item The coefficient of $h^3$ become zero when
\begin{eqnarray*}
\begin{aligned}
& a_{21} = c_2 = \frac{1}{2}, 
a_{31} = 0, 
a_{32} = c_3 = \frac{3}{4},
c_3 = 1, 
b_1 = \frac{2}{9}, 
b_2 = \frac{1}{3}, 
b_3 = \frac{4}{9}.\\
& \epsilon^2_{n2} = 
\frac{
-\frac{1}{3} f^3 f_{uuu}
- f^2 f_{tuu}
- 4f f_u^3
- f f_{ttu}
- 4f_t f_u^2
- \frac{1}{3} f_{ttt}
}{
v f f_{uu}
- 4v f_u^2
+ v f_{tu}
+ f f_u
}\\
& \epsilon^2_{n3} = -\frac{1}{3}\epsilon^2_{n2}.
\end{aligned}
\end{eqnarray*}
\end{enumerate}
\paragraph{(b) IMQ-RBF RK scheme:} The three-stage IMQ-RBF Runge-Kutta scheme is of the form
\begin{eqnarray}    \label{eq:imq3}  
\begin{cases}
\begin{aligned}
v_{n+1} & = v_n + h(b_1 k_1 + b_2 k_2 + b_3 k_3),\\
k_1 &= f(t_n, v_n), \\
k_2 &= f\left(t_n + c_2 h, \sqrt{(1 + \epsilon_{n2}^2 c_{2}^2 h^2)}ha_{21} f_n + \frac{v_n} {\sqrt{1 + \epsilon_{n2}^2 c_{2}^2 h^2}}\right),\\
k_3 &= f\left(t_n + c_3 h, \sqrt{(1 + \epsilon_{n3}^2 c_{3}^2 h^2)}h (a_{31}k_1 + a_{32}k_2)+ \frac{v_n} {\sqrt{1 + \epsilon_{n3}^2 c_{3}^2 h^2}}\right). 
\end{aligned}    
\end{cases}    
\end{eqnarray}
Truncation error is 
\begin{align*}
    \tau_n  = & \frac{u_{n+1}-u_{n}}{h} - (b_1k_1 + b_2k_2 + b_3k_3),\\
      = & (1 -b_1-b_2-b_3)f + h\left[ff_u \left(\frac{1}{2} - a_{21}b_2 -a_{31}b_3 -a_{32}b_3\right) + f_t \left( \frac{1}{2} - b_2c_2 -b_3c_3\right) + \frac{f^2}{2}\right] \\
&\quad 
 +h^2\left[ f^2f_{uu}\left( \frac{1}{6} -\frac{a_{21}^2b_2}{2} - \frac{a_{31}^2b_3}{2} - a_{31}a_{32}b_3 - \frac{a_{32}^2b_3}{2}\right) + ff_u^2 \left( \frac{1}{6}-a_{21}a_{32}b_3\right)  + ff_{ut} \left( \frac{1}{3}-a_{21}b_2c_2 \right. \right.\\
 & \quad
 \left.\left.
 - a_{31}b_3c_3 - a_{32}b_3c_3\right) +f_tf_v\left( \frac{1}{6} - a_{32}b_3c_2\right) +f_{tt}\left( \frac{1}{6} - \frac{b_2c_2^2}{2} - \frac{b_3c_3^2}{2}\right) +\frac{\epsilon_{n2}^2b_2c_2^2u_nf_u}{2}
 \right] + h^3\left[f^3f_{uuu}\left( \right.\right.\\
 & \quad \left. \left.
 \frac{1}{24}-\frac{-a_{21}^3b_2}{6} - \frac{a_{31}^3b_3}{6} - \frac{a_{31}^2a_{32}b_3}{2} - \frac{a_{31}a_{32}^2b_3}{2} - \frac{a_{32}^3b_3}{6}\right) +f^2f_uf_{uu}\left( \frac{1}{6} - \frac{a_{21}^2a_{32}b_3}{2} - a_{21}a_{31}a_{32}b_3 \right.\right.\\
 & \quad \left.\left.
 - a_{21}a_{32}^2b_3\right) + f^2f_{uut} \left( \frac{1}{8}-\frac{a_{21}^2b_2c_2}{2} - \frac{a_{31}^2b_{3}c_3}{2} -a_{31}a_{32}b_3c_3 -\frac{a_{32}^2b_3c_3}{2} \right) + ff_uf_{ut}\left( \frac{5}{24} -a_{21}a_{32}b_3c_2 \right.\right.\\
 & \quad \left.\left. 
 -a_{21}a_{32}b_3c_3\right) +ff_{utt}\left( \frac{1}{8}-\frac{a_{21}b_2c_2^2}{2} - \frac{a_{31}b_3c_3^2}{2} -\frac{a_{32}b_3c_3^2}{2} \right) + ff_tf_{uu}\left( \frac{1}{8} -a_{31}a_{32}b_3c_2 -a_{32}^2b_3c_2\right)\right. \\
 & \quad \left.+ f_{tt}f_u\left( \frac{1}{24} -\frac{a_{32}b_3c_2^2}{2}\right) +f_tf_{ut}\left( \frac{1}{8} - a_{32}b_3c_2c_3\right) + f_{ttt}\left( \frac{1}{24} -\frac{b_2c_2^3}{6} - \frac{b_3c_3^3}{6}\right) + \frac{a_{21}b_2c_2^2u_n\epsilon_{n2}^2ff_{uu}}{2} \right.\\
 &\quad \left. -\frac{a_{21}b_2c_2^2\epsilon_{n2}^2ff_u}{2}\right]+\mathcal{O}(h^4).
\end{align*}
Applying  \eqref{eq:sumb}, \eqref{eq:rk_3_con} and 
\begin{equation*}
    b_2c_2^2\epsilon_{n2}^2 + b_3c_3^2\epsilon_{n3}^2 = 0,
 \end{equation*}
 to the above local truncation error term vanish. Here, we have four cases.
 \begin{enumerate}[label=\Roman*.]
\item  The coefficient of $h^3$ become zero when,
\begin{eqnarray*}
\begin{aligned}
& a_{21} = c_2 = \frac{1}{2}, 
a_{31} = -1, 
a_{32} = 2,
c_3 = 1, 
b_1 = \frac{1}{6}, 
b_2 = \frac{2}{3}, 
b_3 = \frac{1}{6},\\
&  \epsilon^2_{n2} = \frac{-f^2 f_u f_{uu} - f f_t f_{uu} + f f_u^3 - f f_u f_{tu} + f_t f_u^2 - f_t f_{tu}
}{u f f_{uu} - u f_u^2 + u f_{tu} - f f_u},\\
& \epsilon^2_{n3} = -\epsilon^2_{n2}.
\end{aligned}    
\end{eqnarray*}

\item The coefficient of $h^3$ become zero when
\begin{eqnarray*}
\begin{aligned}
& a_{21} = c_2 = \frac{1}{3}, 
a_{31} = -\frac{5}{12}, 
a_{32} = \frac{5}{4}, 
c_3 = \frac{5}{6}, 
b_1 = \frac{1}{10}, 
b_2 = \frac{1}{2}, 
b_3 = \frac{2}{5},\\
& \epsilon^2_{n2} = \frac{  
    -f f_t f_{uu} + 3f f_u^3 + f f_u f_{tu} + 3f_t f_u^2 - f_t f_{tu} + f_{tt} f_u  
}{  
    u f f_{uu} - 2u f_u^2 + u f_{tu} - f f_u  
}  
, \\
& \epsilon^2_{n3} = -\frac{1}{5}\epsilon^2_{n2}.\\
\end{aligned}    
\end{eqnarray*}

\item  The coefficient of $h^3$ become zero when 
\begin{eqnarray*}
\begin{aligned}
& a_{21} = c_2 = 1, 
a_{31} = \frac{1}{4}, 
a_{32} = \frac{1}{4}, 
c_3 = \frac{1}{2}, 
b_1 = \frac{1}{6}, b_2 = \frac{1}{6}, 
b_3 = \frac{2}{3},\\
& \epsilon^2_{n2} =\frac{ -f f_t f_{uu} - f f_u^3 + f f_uf_{tu} - f_t f_u^2 - f_t f_{tu} + f_{tt} f_u  
}{  
    u f f_{uu} + 2v f_u^2 + u f_{tu} - f f_u }   , \\
& \epsilon^2_{n3} = -\epsilon^2_{n2}.
\end{aligned}    
\end{eqnarray*}
\item  The coefficient of $h^3$ become zero when
\begin{eqnarray*}
\begin{aligned}
& a_{21} = c_2 = \frac{1}{2}, 
a_{31} = 0, 
a_{32} = c_3 = \frac{3}{4},
c_3 = 1, 
b_1 = \frac{2}{9}, 
b_2 = \frac{1}{3}, 
b_3 = \frac{4}{9},  \\
& \epsilon^2_{n2} = 
\frac{
    \frac{1}{3}f^3 f_{uuu} + f^2 f_{tuu} + 4f f_u^3 + f f_{ttu} + 4f_t f_u^2 + \frac{1}{3}f_{ttt}
}{
    u f f_{uu} - 4u f_u^2 + u f_{tu} - f f_u
},\\
& \epsilon^2_{n3} = -\frac{1}{3}\epsilon^2_{n2}.
\end{aligned}    
\end{eqnarray*}
\end{enumerate}
\section{Convergence of RBF Runge-Kutta Methods}
Numerical methods derived in previous section can be applied on IVP if there exists a unique solution. IVP of the form {\eqref{eq:ivp}} has a unique solution if $f(t,u)$ satisfies Lipschitz continuity with respect to second-coordinate that is $|f(t,u_1) - f(t,u_2)| \leq L|u_1 - u_2|$, where $L$ is Lipschitz constant. Now, we prove the convergence of above proposed methods. Note that, we solve the IVP on an interval $[a,b]$ which is divided by uniform mesh length $h=\dfrac{b-a}{N}$ as $N+1$ points with mesh grid points $a=t_0 < t_1<...<t_N=b$ where $t_n=a+nh, n=0,1,...,N.$ We also denote $u_n=u(t_n)$ as exact solution and $v_n$ as numerical approximation for the proposed RBF Runge-Kutta schemes.
\begin{theorem}
    If $\epsilon_{n2}^2$
is bounded for all $n = 0, 1, \dots, N - 1$, then the
two-stage MQ-RBF Runge-Kutta method (\ref{mq_rk2}) converges provided the method satisfies (\ref{eq:sumb}), (\ref{eq:sumbn}) and
(\ref{eq:rk_2_con}).
\end{theorem}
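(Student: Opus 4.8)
The plan is to treat (\ref{mq_rk2}) as a one-step method $v_{n+1} = v_n + h\,\Phi(t_n, v_n, h)$ with increment function
\[
\Phi(t, w, h) = b_1 f(t,w) + b_2\, f\!\left(t + c_2 h,\, (w + h a_{21} f(t,w))\sqrt{1 + \epsilon_n^2 a_{21}^2 h^2}\right),
\]
and then to invoke the classical convergence principle for one-step methods: a scheme that is \emph{consistent} and whose increment function is \emph{Lipschitz in the state variable}, uniformly for small $h$, is convergent. Concretely, I would introduce the global error $e_n = u_n - v_n$, derive a one-step error recursion, and close it with the discrete Gr\"onwall inequality. (Here $\epsilon_n^2$ denotes the shape parameter written $\epsilon_{n2}^2$ in the statement.)

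First I would establish consistency directly from the truncation-error expansion already computed for (\ref{mq_rk2}). Imposing (\ref{eq:sumb}) annihilates the $\mathcal{O}(1)$ term $(1-b_1-b_2)f$; combining (\ref{eq:sumbn}), which gives $a_{21}=c_2$, with (\ref{eq:rk_2_con}), which gives $b_2 c_2 = \tfrac12$, annihilates the $\mathcal{O}(h)$ term, since then $b_2 c_2 = a_{21} b_2 = \tfrac12$. The only surviving shape-parameter contribution is $-\tfrac12 \epsilon_n^2 a_{21}^2 b_2 u_n f_u\, h^2$; because $\epsilon_n^2$ is assumed bounded (say $|\epsilon_n^2| \le M$) and $u_n, f_u$ are bounded on $[a,b]$, this term is $\mathcal{O}(h^2)$. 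Hence $\tau = \max_{0 \le n \le N-1} |\tau_n| = \mathcal{O}(h^2) \to 0$ as $h \to 0$, which is consistency (and in passing shows the scheme is at least second order regardless of how $\epsilon_n^2$ is tuned).

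The step I expect to be the crux is the uniform Lipschitz bound on $\Phi$, and this is exactly where the boundedness hypothesis is indispensable. Writing $S_n = \sqrt{1 + \epsilon_n^2 a_{21}^2 h^2}$, I would first note that $|\epsilon_n^2| \le M$ guarantees, for all $h$ below some $h_0$, that the radicand satisfies $1 + \epsilon_n^2 a_{21}^2 h^2 \ge \tfrac12 > 0$ even when $\epsilon_n^2 < 0$ (as allowed by $\epsilon \in \mathbb{R} \cup i\mathbb{R}$), so $S_n$ is real with $0 < S_n \le \sqrt{1 + M a_{21}^2 h_0^2} =: S_{\max}$. Then, for two states $w$ and $\tilde w$, the inner arguments of $k_2$ differ by $S_n\big[(w - \tilde w) + h a_{21}(f(t,w) - f(t,\tilde w))\big]$, whose magnitude is at most $S_{\max}(1 + h_0 a_{21} L)|w - \tilde w|$ by the Lipschitz continuity of $f$. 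Propagating this through the outer evaluation of $f$ and adding the $b_1 f$ term yields
\[
|\Phi(t, w, h) - \Phi(t, \tilde w, h)| \le \Lambda\, |w - \tilde w|, \qquad \Lambda = |b_1| L + |b_2| L\, S_{\max}(1 + h_0 a_{21} L),
\]
with $\Lambda$ independent of $n$ and of $h \le h_0$. Without boundedness of $\epsilon_n^2$ the factor $S_n$ could blow up (or the radicand turn negative), and no such uniform $\Lambda$ would exist.

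Finally I would subtract the numerical update from the exact relation $u_{n+1} = u_n + h\,\Phi(t_n, u_n, h) + h\tau_n$ to obtain
\[
e_{n+1} = e_n + h\big(\Phi(t_n, u_n, h) - \Phi(t_n, v_n, h)\big) + h\tau_n,
\]
whence $|e_{n+1}| \le (1 + h\Lambda)|e_n| + h\tau$. A routine induction (discrete Gr\"onwall) then gives
\[
|e_n| \le e^{\Lambda(t_n - a)}|e_0| + \frac{\tau}{\Lambda}\big(e^{\Lambda(t_n - a)} - 1\big) \le e^{\Lambda(b-a)}\Big(|e_0| + \frac{\tau}{\Lambda}\Big).
\]
Since $e_0 = 0$ and $\tau = \mathcal{O}(h^2) \to 0$, it follows that $\max_{0 \le n \le N}|e_n| \to 0$ as $h \to 0$, which establishes convergence.
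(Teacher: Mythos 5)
Your proof is correct and follows essentially the same route as the paper: a one-step error recursion $E_{n+1} \le \phi_n E_n + h|\tau_n|$ obtained from the Lipschitz continuity of $f$ in the state, closed by a discrete Gr\"onwall argument. In fact your version is tighter in the two places where the paper is careless: you verify consistency ($\|\tau\|_\infty = \mathcal{O}(h^2) \to 0$) explicitly rather than leaving it implicit, and your Lipschitz constant $\Lambda$ is genuinely uniform in $h \le h_0$, whereas the paper's constant $\mathcal{C}_2 = \sup_{0<h\le b-a}\sqrt{1+\epsilon_{n2}^2 c_2^2 h^2}/h$ is infinite as literally written (the bound only works once one keeps the factor $hL$ that the paper drops in front of $b_2$).
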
 
\begin{proof}
Suppose, $E_{n+1}$ be the error between $u_{n+1}$ and $v_{n+1}$ for fixed $t = t_n$. Now,
\begin{align*}
k_1(w) &= f(t_n, w), \\
k_2(w) &= f \left(t_n + c_2h, (w + hk_1a_{21})\sqrt{1 + \epsilon_{n2}^2 c_{2}^2 h^2} \right).
\end{align*}
$u_{n+1}$ can be expressed as 
\begin{align*}
 u_{n+1} = u_n + h (b_1k_1(u_n) + b_2k_2(u_n)) + h \tau_n,
\end{align*}
where $\tau_n$ is truncation error.
Since, $f(t, u)$ satisfies Lipschitz continuity in $u$, so,
\begin{equation*}
|k_1(u_n)- k_1(v_n)| = |f(t_n, u_n)-f(t_n, v_n)| \leq L|u_n - v_n|,
\end{equation*}
and 
\begin{align*}
    |k_2(u_n)- k_2(v_n)| & = \biggl|f\left(t_n,(u_n + ha_{21}k_1(u_n)) \sqrt{1 + \epsilon_{n2}^2 c_{2}^2 h^2}
\right)-f\left(t_n,(v_n + ha_{21}k_1(v_n) \sqrt{1 + \epsilon_{n2}^2 c_{2}^2 h^2}\right)\biggl|, \\
    & \leq L\biggl|\left(u_n +ha_{21}k_1(u_n)\right)\sqrt{(1 + \epsilon_{n2}^2 c_{2}^2 h^2)}  - \left(v_n + ha_{21}k_1(v_n)\right)\sqrt{(1 + \epsilon_{n2}^2 c_{2}^2 h^2)}\biggl|, \\
    & \leq L\biggl|(u_n - v_n)\sqrt{1 +\epsilon_{n2}^2 c_{2}^2 h^2} + a_{21}hL(u_n -v_n)\sqrt{1 + \epsilon_{n2}^2c_2^2h^2}\biggl|, \\
   &  \leq L\left(\sqrt{(1 + \epsilon_{n2}^2 c_{2}^2 h^2)} + a_{21}hL \sqrt{1 + \epsilon_{n2}^2c_2^2h^2}\right)|u_n - v_n |,
\end{align*}
where $L$ is Lipschitz constant. Now, we compute error
\begin{align*}
    E_{n+1} & = |u_{n+1} - v_{n+1}|, \\
& = |(u_n - v_n) + h [ b_1(k_1(u_n) - k_1(v_n)) + b_2(k_2(u_n) - k_2(v_n))] + h\tau_n|, \\
& \leq |u_n -v_n| + h [b_1|k_1(u_n) - k_1(v_n)| + b_2|k_2(u_n)- k_2(v_n)|] + h|\tau_n|, \\
& \leq \left(1 + hLb_1 + b_2(1 + hL)\sqrt{1 + \epsilon_{n2}^2c_2^2h^2}\right)|u_n -v_n| + h|\tau_n|, \\
& = \phi_n E_n + h|\tau_n|,
\end{align*}
with
\begin{align*}
    \phi_n & = 1 + b_1hL + b_2 (1 + a_{21}hL)  \sqrt{1 + \epsilon_{n2}^2 c_{2}^2 h^2},  \\
   & \leq 1 + h L + e^{hL}\mathcal{C}_2h, \\
   & \leq e^{hL} +  e^{hL}\mathcal{C}_2h,\\
   & =  e^{hL}(1 + \mathcal{C}_2h ),   \\
   & = e^{h(L+\mathcal{C}_2)},
\end{align*} where,
\[
\mathcal{C}_2 = \sup_{\substack{0 < h \leq b - a \\ n = 0, \dots, N - 1}} 
\frac{ \sqrt{1 + \epsilon_{n2}^2 c_2^2 h^2}}{h}.
\]
Here, $c_2$ is constant and $\epsilon_{n2}^2$ is bounded as it depends on $u$ and $u$ is bounded ensured by Lipschitz continuity. 
Thus $E_n$ satisfies  
\begin{equation*}
    E_n \leq \sum_{j=0}^{n-1} \varphi_j E_0 + h \sum_{j=1}^{n-1} \left( \sum_{m=j}^{n-1} \varphi_m \right) |\tau_{j-1}| + h |\tau_{n-1}|.
\end{equation*}
Observing,
\begin{equation*}
    \sum_{m=j}^{n-1} \varphi_m \leq \sum_{m=0}^{n-1} \varphi_m \leq e^{h(L + C_2) n} \leq e^{h(L + C_2) N} = e^{(L + C_2)(b - a)},
\end{equation*}
substituting, \( E_0 = 0 \) and
\[
\|\tau\|_\infty = \max_{n = 0, \dots, N-1} |\tau_n|,
\]
it concludes that for every \( n = 0, 1, \dots, N \),
\begin{equation*}
    E_n \leq h e^{(L + C_2) T} \sum_{j=0}^{n-1} \|\tau\|_\infty = n h e^{(L + C_2)(b - a)} \|\tau\|_\infty \leq (b - a) e^{(L + C_2)(b - a)} \|\tau\|_\infty,
\end{equation*}
and hence 
\begin{equation*}
\lim_{\substack{h \to 0 \\ N h = b - a}} E_n = 0.
\end{equation*}
So, $v_n$ converges to $u_n$.
\end{proof} 
\begin{theorem} If $\epsilon_{n2}^2$ and $\epsilon_{n3}^2$
are bounded for all $n = 0, 1, \dots, N - 1$. then the
three-stage MQ-RBF Runge-Kutta method (\ref{eq:mq3}), converges provided the method satisfies (\ref{eq:sumb}), (\ref{eq:sumbn}) and
(\ref{eq:rk_3_con}).
\end{theorem}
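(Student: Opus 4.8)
The plan is to reproduce the error-recursion argument of Theorem~4.1 almost verbatim, now carrying three stage functions obtained by freezing the starting value: $k_1(w)=f(t_n,w)$, $k_2(w)=f\!\left(t_n+c_2h,(w+ha_{21}k_1(w))\sqrt{1+\epsilon_{n2}^2c_2^2h^2}\right)$, and $k_3(w)$ defined analogously with the radial factor $\sqrt{1+\epsilon_{n3}^2c_3^2h^2}$. First I would record the two bounds already established in the two-stage case, namely $|k_1(u_n)-k_1(v_n)|\le L|u_n-v_n|$ and
\[
|k_2(u_n)-k_2(v_n)|\le L\left(1+a_{21}hL\right)\sqrt{1+\epsilon_{n2}^2c_2^2h^2}\,|u_n-v_n|,
\]
the radical emerging from the multiplicative argument of $k_2$ and the $a_{21}hL$ term from its nested dependence on $k_1$.

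The one genuinely new estimate is for $k_3$. Since its argument is $\left(w+h(a_{31}k_1(w)+a_{32}k_2(w))\right)\sqrt{1+\epsilon_{n3}^2c_3^2h^2}$, a single use of the Lipschitz hypothesis gives
\[
|k_3(u_n)-k_3(v_n)|\le L\sqrt{1+\epsilon_{n3}^2c_3^2h^2}\,\Big|(u_n-v_n)+ha_{31}\big(k_1(u_n)-k_1(v_n)\big)+ha_{32}\big(k_2(u_n)-k_2(v_n)\big)\Big|.
\]
Feeding in the $k_1$ and $k_2$ bounds collapses the right-hand side to a single multiple of $|u_n-v_n|$, with nested factor $L\sqrt{1+\epsilon_{n3}^2c_3^2h^2}\left(1+a_{31}hL+a_{32}hL(1+a_{21}hL)\sqrt{1+\epsilon_{n2}^2c_2^2h^2}\right)$. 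Inserting the three stage bounds into $E_{n+1}\le E_n+h\big(b_1|k_1\,\mathrm{diff}|+b_2|k_2\,\mathrm{diff}|+b_3|k_3\,\mathrm{diff}|\big)+h|\tau_n|$ then yields the familiar one-step inequality $E_{n+1}\le \phi_n E_n+h|\tau_n|$.

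It remains to bound the amplification factor. As in Theorem~4.1 I would apply $1+x\le e^x$ to each $h$-linear contribution and collect every product of radial factors and $hL$ terms into a single constant $\mathcal{C}_3$, obtaining $\phi_n\le e^{h(L+\mathcal{C}_3)}$, where
\[
\mathcal{C}_3=\sup_{\substack{0<h\le b-a\\ n=0,\dots,N-1}}\frac{b_2\sqrt{1+\epsilon_{n2}^2c_2^2h^2}+b_3\sqrt{1+\epsilon_{n3}^2c_3^2h^2}\left(1+a_{32}hL\sqrt{1+\epsilon_{n2}^2c_2^2h^2}\right)}{h}.
\]
Because $\epsilon_{n2}^2$ and $\epsilon_{n3}^2$ are bounded uniformly in $n$ and $c_2,c_3$ are fixed, every radical is bounded on $0<h\le b-a$, so $\mathcal{C}_3$ is finite. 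The telescoping estimate $\sum_{m=0}^{n-1}\varphi_m\le e^{(L+\mathcal{C}_3)(b-a)}$, the substitution $E_0=0$, and the resulting $E_n\le (b-a)e^{(L+\mathcal{C}_3)(b-a)}\|\tau\|_\infty$ then go through word for word, giving $E_n\to 0$ as $h\to 0$ with $Nh=b-a$.

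I expect the only real difficulty to lie in the bookkeeping of the nested third-stage bound: one must check that after substituting the $k_2$ estimate into the $k_3$ estimate, the compound of $\sqrt{1+\epsilon_{n2}^2c_2^2h^2}$, $\sqrt{1+\epsilon_{n3}^2c_3^2h^2}$ and the $hL$ terms still admits a clean majorant that exponentiates, so that $\mathcal{C}_3$ is genuinely finite. This is exactly where the boundedness hypotheses on $\epsilon_{n2}^2$ and $\epsilon_{n3}^2$ are used; everything else is a verbatim repetition of the two-stage argument.
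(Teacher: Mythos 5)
Your proposal follows the paper's proof essentially verbatim: the same stage functions $k_i(w)$, the same stage-wise Lipschitz bounds, the same one-step recursion $E_{n+1}\le \phi_n E_n + h|\tau_n|$, the same exponential majorization $\phi_n\le e^{h(L+\mathcal{C}_3)}$ via a supremum constant, and the same telescoping conclusion. If anything, your $k_3$ estimate is slightly tidier than the paper's in that you carry the factor $\sqrt{1+\epsilon_{n3}^2c_3^2h^2}$ through every term of the argument, but the route and the conclusion coincide.
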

\begin{proof} Suppose, $E_{n+1}$ be the error between $u_{n+1}$ and $v_{n+1}$ for fix $t = t_n$. Now,
\begin{align*}
k_1(w) &= f(t_n, w), \\
k_2(w) &= f(t_n + c_2h, (w + hk_1a_{21})\sqrt{1 + \epsilon_{n2}^2 c_{2}^2 h^2} ),\\
k_3(w) &= f(t_n + c_3h, (w + h(k_1a_{31} + k_2a_{32}))\sqrt{1 + \epsilon_{n3}^2 c_{3}^2 h^2} ).
\end{align*}
    $u_{n+1}$ can be expressed as , 
\begin{align*}
 u_{n+1} = u_n + h (b_1k_1(u_n) + b_2k_2(u_n) + b_3k_3(u_n)) + h \tau_n,
\end{align*}
where $\tau_n$ is truncation error. Since, 
 $f(t, u)$ satisfies Lipschitz continuity with respect to  $u$, so,
\begin{equation*}
|k_1(u_n)- k_1(v_n)| = |f(t_n, u_n)-f(t_n, v_n)| \leq L|u_n - v_n|,
\end{equation*}
\begin{equation*}
|k_2(u_n)- k_2(v_n)| \leq L\left (\sqrt{1 + \epsilon_{n2}^2 c_{2}^2 h^2} + a_{21}hL \sqrt{1 + \epsilon_{n2}^2c_2^2h^2} \right) \left|u_n - v_n \right|,
\end{equation*}
\begin{align*}
    |k_3(u_n)- k_3(v_n)| & = \bigg|f \left(t_n,\left ( u_n + h(a_{31}k_1(u_n) + a_{32}k_2(u_n)\right) \sqrt{1 + \epsilon_{n3}^2 c_{3}^2 h^2}
\right)\\
&\qquad -f\left(t_n,(v_n + h(a_{31}k_1(v_n) + a_{32}k_2(v_n)) \sqrt{1 + \epsilon_{n3}^2 c_{3}^2 h^2}\right)\bigg|, \\
    & \leq L\left|(u_n - v_n) + \left( ha_{31} L|u_n - v_n| + ha_{32}L(1 + hL)\sqrt{1 + \epsilon_{n2}^2c_2^2h^2}|u_n-v_n|\sqrt{1 + \epsilon_{n3}^2c_3^2h^2} \right) \right|, \\
   &  \leq L \left(1 + \left( ha_{31} + ha_{32}(1 + hL)\sqrt{1 + \epsilon_{n2}^2c_2^2h^2}L\sqrt{1 + \epsilon_{n3}^2c_3^2h^2}\right) \right) |u_n - v_n |,
\end{align*}
where $L$ is Lipschitz constant.
\begin{align*}
    E_{n+1} & = |u_{n+1} - v_{n+1}|, \\
    & = |(u_n - v_n) + h [ b_1(k_1(u_n) - k_1(v_n)) + b_2(k_2(u_n) - k_2(v_n)) + b_3(k_3(u_n) - k_3(v_n)) ] + h\tau_n|, \\
    & \leq |u_n - v_n| + h \left[ b_1|k_1(u_n) - k_1(v_n)| + b_2|k_2(u_n) - k_2(v_n)| + b_3|k_3(u_n) - k_3(v_n)| \right] + h|\tau_n|, \\
    & \leq \left( 1 + hLb_1 + b_2(1 + a_{21}hL)\sqrt{1 + \epsilon_{n2}^2 c_2^2 h^2} \right. \\
    & \quad + \left. b_3 hL \left( 1 + h \left( a_{31} + ha_{32}(1 + hL) \sqrt{1 + \epsilon_{n2}^2 c_2^2 h^2} \right) \right) \sqrt{1 + \epsilon_{n3}^2 c_3^2 h^2} \right) \left| u_n - v_n \right| + h|\tau_n|, \\
    & = \phi_n E_n + h|\tau_n|.
\end{align*}
Here,
\begin{align*}
    \phi_n & = 1 + b_1hL + b_2 (1 + hL)  \sqrt{1 + \epsilon_{n2}^2 c_{2}^2 h^2} + b_3hL\left( 1 + (hLa_{31} + hLa_{32}(1 +hL))\sqrt{1 + \epsilon_{n3}^2c_3^2h^2}    \right),\\
   & \leq 1 + h L + b_2hL( 1 + hL)\sqrt{1 + \epsilon_{n2}^2c_2^2h^2} + b_3hL\left(1 + hLa_{31} + hLa_{32}(1+hL)\sqrt{1 + \epsilon_{n3}^2c_3^2h^2}\right),\\
   & \leq e^{hL} + b_2hLe^{hL}\sqrt{1+\epsilon_{n2}^2c_2^2h^2} +b_3hL{e^{hL}+ hLe^{hL}}\sqrt{1 + \epsilon_{n3}^2c_3^2h^2},\\
   & \leq e^{hL} \biggl(1 + b_2hL \sqrt{1 + \epsilon_{n2}^2c_2^2h^2}\biggl)+ b_3hL(1 + hL)\sqrt{ 1 
   + \epsilon_{n3}^2c_3^2h^2}, \\
   & \leq  e^{hL}(1 + \mathcal{C}_3h ),   \\
   & \leq e^{h(L+\mathcal{C}_3)},
\end{align*} with,
\[
\mathcal{C}_3 = \sup_{\substack{0 < h \leq b - a \\ n = 0, \dots, N - 1}} 
\frac{ b_2L\sqrt{1 + \epsilon_{n2}^2 c_2^2 h^2} + b_3L (1+hL)\sqrt{1+\epsilon_{n3}^2c_3^2h^2}}{h}.
\]
Here, $c_3$ is constant and $\epsilon_{n2}^2$ and $\epsilon_{n3}^2$ are bounded as it depends on $u$ and $u$ is bounded ensured by Lipschitz continuity.
Therefore $E_n$ satisfies
\begin{equation*}
    E_n \leq \sum_{j=0}^{n-1} \varphi_j E_0 + h \sum_{j=1}^{n-1} \left( \sum_{m=j}^{n-1} \varphi_m \right) |\tau_{j-1}| + h |\tau_{n-1}|.
\end{equation*}
Clearly,
\begin{equation*}
    \sum_{m=j}^{n-1} \varphi_m \leq \sum_{m=0}^{n-1} \varphi_m \leq e^{h(L + C_2) n} \leq e^{h(L + C_2) N} = e^{(L + C_2)(b - a)}.
\end{equation*}
Upon considering  \( E_0 = 0 \) and
\[
\|\tau\|_\infty = \max_{n = 0, \dots, N-1} |\tau_n|,
\]
it can be concluded that for every \( n = 0, 1, \dots, N \),
\begin{equation*}
    E_n \leq h e^{(L + C_2) T} \sum_{j=0}^{n-1} \|\tau\|_\infty = n h e^{(L + C_2)(b - a)} \|\tau\|_\infty \leq (b - a) e^{(L + C_2)(b - a)} \|\tau\|_\infty,
\end{equation*}
and hence 
\begin{equation*}
\lim_{\substack{h \to 0 \\ N h = b - a}} E_n = 0.
\end{equation*}
\end{proof}
\begin{theorem} If $\epsilon_{n2}^2$
is bounded for all $n = 0, 1, \dots, N - 1$. Then the
two-stage IMQ-RBF Runge-Kutta method ({\ref{eq:imq_2}}) converges ({\ref{eq:sumb}}), provided it satisfies ({\ref{eq:sumbn}}) and
({\ref{eq:rk_2_con}}).
\end{theorem}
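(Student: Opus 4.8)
The plan is to reproduce the error-recursion argument of Theorems 4.1 and 4.2, modifying only the Lipschitz estimate for the second stage to match the IMQ structure. First I would fix $t=t_n$, set $E_{n+1}=|u_{n+1}-v_{n+1}|$, and introduce the parametrized stage maps
\[
k_1(w)=f(t_n,w),\qquad
k_2(w)=f\!\left(t_n+c_2h,\ \sqrt{1+\epsilon_{n2}^2c_2^2h^2}\,h\,k_1(w)+\frac{w}{\sqrt{1+\epsilon_{n2}^2c_2^2h^2}}\right),
\]
where $a_{21}=c_2$ by \eqref{eq:sumbn}. Writing the exact solution as $u_{n+1}=u_n+h\bigl(b_1k_1(u_n)+b_2k_2(u_n)\bigr)+h\tau_n$ with $\tau_n$ the local truncation error isolates the error propagation.

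Next I would estimate the stage differences via the Lipschitz condition on $f$. The first stage gives $|k_1(u_n)-k_1(v_n)|\leq L|u_n-v_n|$ immediately. For the second stage, the difference of IMQ arguments is $\sqrt{1+\epsilon_{n2}^2c_2^2h^2}\,h\bigl(k_1(u_n)-k_1(v_n)\bigr)+\bigl(u_n-v_n\bigr)/\sqrt{1+\epsilon_{n2}^2c_2^2h^2}$, so Lipschitz continuity yields
\[
|k_2(u_n)-k_2(v_n)|\leq L\left(\frac{1}{\sqrt{1+\epsilon_{n2}^2c_2^2h^2}}+hL\sqrt{1+\epsilon_{n2}^2c_2^2h^2}\right)|u_n-v_n|.
\]
This is the only place the IMQ method departs from the MQ proof: the $v_n$-term now carries a reciprocal square-root weight, whereas in the MQ case the square-root factor multiplies the entire bracket.

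Assembling these bounds into the triangle-inequality estimate for $E_{n+1}$ produces the recursion $E_{n+1}\leq\phi_nE_n+h|\tau_n|$ with
\[
\phi_n=1+b_1hL+b_2hL\left(\frac{1}{\sqrt{1+\epsilon_{n2}^2c_2^2h^2}}+hL\sqrt{1+\epsilon_{n2}^2c_2^2h^2}\right).
\]
I would then bound $\phi_n\leq e^{hL}(1+\mathcal{C}_2h)\leq e^{h(L+\mathcal{C}_2)}$ for a finite constant $\mathcal{C}_2$, using that the reciprocal square-root factor is at most one and the square-root factor is uniformly controlled once $\epsilon_{n2}^2$ is bounded. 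Iterating with $E_0=0$ and the uniform product bound $\sum_{m=j}^{n-1}\varphi_m\leq e^{(L+\mathcal{C}_2)(b-a)}$ gives $E_n\leq(b-a)\,e^{(L+\mathcal{C}_2)(b-a)}\,\|\tau\|_\infty$, whence $E_n\to0$ as $h\to0$ with $Nh=b-a$, establishing convergence.

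The main obstacle is ensuring the constant $\mathcal{C}_2$ is finite, which reduces to verifying that both the reciprocal and the ordinary square-root weights stay bounded and positive for small $h$. Since the shape parameter is allowed to be imaginary, $\epsilon_{n2}^2$ may be negative, so one must check that $1+\epsilon_{n2}^2c_2^2h^2$ remains positive and bounded away from zero over the relevant range of $h$; this is exactly where the boundedness hypothesis on $\epsilon_{n2}^2$ (itself a consequence of the boundedness of $u$ guaranteed by Lipschitz continuity) is used. Once this uniform control is in place, the Grönwall-type iteration is word-for-word the MQ argument.
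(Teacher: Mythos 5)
Your proposal follows essentially the same route as the paper's proof: the same stage maps $k_1(w),k_2(w)$, the same Lipschitz estimate giving the recursion $E_{n+1}\leq\phi_n E_n+h|\tau_n|$ with the same $\phi_n$ (the paper likewise bounds the reciprocal square-root by one and absorbs the remaining factor into a constant $\mathcal{C}_2$ before applying the discrete Gr\"onwall iteration with $E_0=0$). Your closing observation that one must also check $1+\epsilon_{n2}^2c_2^2h^2$ stays positive and bounded away from zero when the shape parameter is imaginary is a point the paper leaves implicit, but it does not change the argument.
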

\begin{proof} Suppose, $E_{n+1}$ be the error between $u_{n+1}$ and $v_{n+1}$ for fix $t = t_n$. Let
\begin{align*}
k_1(w) &= f(t_n, w), \\
k_2(w) &= f\biggl(t_n + c_2h, \sqrt{(1 + \epsilon_{n2}^2 c_{2}^2 h^2)} \,\, h c_2 k_1 + \frac{w} {\sqrt{1 + \epsilon_{n2}^2 c_{2}^2 h^2}}\biggl).
\end{align*}
 $u_{n+1}$ can be expressed as,
\begin{align*}
 u_{n+1} = u_n + h (b_1k_1(u_n) + b_2k_2(u_n)) + h \tau_n,
\end{align*}
where $ \tau_n$ be the trancation error.
Since  $f(t, u)$ satisfies Lipschitz continuity with respect to $u$,
\begin{equation*}
|k_1(u_n)- k_1(v_n)| = |f(t_n, u_n)-f(t_n, v_n)| \leq L|u_n - v_n|,
\end{equation*}
and 
\begin{align*}
    |k_2(u_n)- k_2(v_n)| & = \Biggl|f\left(t_n, \sqrt{(1 + \epsilon_{n2}^2 c_{2}^2 h^2)} \,\, h c_2 k_1 (u_n) + \frac{u_n} {\sqrt{1 + \epsilon_{n2}^2 c_{2}^2 h^2}}\right) \\
    &\qquad -f \left(t_n, \sqrt{(1 + \epsilon_{n2}^2 c_{2}^2 h^2)} \,\, h c_2 k_1(v_n) + \frac{v_n} {\sqrt{1 + \epsilon_{n2}^2 c_{2}^2 h^2}}\right)\Biggl|, \\
    & \leq L\Biggl|\sqrt{(1 + \epsilon_{n2}^2 c_{2}^2 h^2)}\,\,  h c_2 f(u_n) + \frac{u_n} {\sqrt{1 + \epsilon_{n2}^2 c_{2}^2 h^2}} - \sqrt{(1 + \epsilon_{n2}^2 c_{2}^2 h^2)}\,\,  h c_2 f (v_n)\\
    & - \frac{v_n} {\sqrt{1 + \epsilon_{n2}^2 c_{2}^2 h^2}} \Biggl|, \\
    & \leq L\biggl|\sqrt{(1 + \epsilon_{n2}^2 c_{2}^2 h^2)} \,\, h c_2 (f(u_n) - f(v_n)) + \frac{u_n - v_n} {\sqrt{1 + \epsilon_{n2}^2 c_{2}^2 h^2}}\biggl|, \\
   &  \leq L\biggl|\sqrt{(1 + \epsilon_{n2}^2 c_{2}^2 h^2)} \,\, h c_2 L + \frac{1} {\sqrt{1 + \epsilon_{n2}^2 c_{2}^2 h^2}}\biggl||u_n - v_n|,
\end{align*}
where $L$ be Lipschitz constant. Now,
\begin{align*}
    E_{n+1} & = |u_{n+1} - v_{n+1}|, \\
& = |(u_n - v_n) + h [ b_1(k_1(u_n) - k_1(v_n)) + b_2(k_2(u_n) - k_2(v_n))] + h\tau_n|, \\
& \leq |u_n -v_n| + h [b_1|k_1(u_n) - k_1(v_n)| + b_2|k_2(u_n)- k_2(v_n)|] + h|\tau_n|, \\
& \leq|u_n -v_n| + hb_1 L|u_n -v_n| + h b_2 L\left| \sqrt{(1 + \epsilon_{n2}^2 c_{2}^2 h^2)}\,\,  h c_2 L + \frac{1}{\sqrt{(1 + \epsilon_{n2}^2 c_{2}^2 h^2)}}\right||u_n -v_n|  + h|\tau_n|, \\
& = \phi_n E_n + h|\tau_n|
\end{align*}
with the expression
\begin{align*}
    \phi_n & = 1 + b_1hL + hb_2 L \left( \sqrt{1 + \epsilon_{n2}^2 c_{2}^2 h^2} \,\, L h c_2  + \frac{1}{\sqrt{(1 + \epsilon_{n2}^2 c_{2}^2 h^2)}} \right), \\
   & \leq 1 + b_1 h L + h b_2 L \left(\sqrt{1 + \epsilon_{n2}^2 c_{2}^2h^2}\,\, L h c_2 + 1\right), \\
   & = 1 + b_1 h L + h b_2 L \sqrt{1 + \epsilon_{n2}^2 c_{2}^2h^2}\,\,L ha_{21} + hb_2L, \\
   & = 1 + hb_1L + hb_2L\sqrt{1 + \epsilon_{n2}^2 c_{2}^2h^2}L ha_{21} + hL(1 - b_1), \\
   & = 1 + hL + hb_2L\sqrt{1 + \epsilon_{n2}^2 c_{2}^2h^2} \,\,Lh a_{21} ,
\end{align*}
where,
\[
\mathcal{C}_2 = \sup_{\substack{0 < h \leq b - a \\ n = 0, \dots, N - 1}} 
\frac{h L^2 b_2 a_{21} \sqrt{1 + \epsilon_{n2}^2 c_2^2 h^2}}{e^{hL}}. 
\]
Here, $c_2$ is constant and $\epsilon_{n2}^2$ is bounded as it depends on $u$ and $u$ is bounded ensured by Lipschitz continuity of $f$. Thus,
\begin{align*}
    \phi_n & \leq e^{hL} + \mathcal{C}_2 h e^{hL}, \\
      & \leq (1 + \mathcal{C}_2 h) e^{hL}, \\
      & \leq e^{h(L+\mathcal{C}_2)}.
\end{align*}
Hence $E_n$ satisfies, 
\begin{equation*}
    E_n \leq \sum_{j=0}^{n-1} \varphi_j E_0 + h \sum_{j=1}^{n-1} \left( \sum_{m=j}^{n-1} \varphi_m \right) |\tau_{j-1}| + h |\tau_{n-1}|.
\end{equation*}
Here,
\begin{equation*}
    \sum_{m=j}^{n-1} \varphi_m \leq \sum_{m=0}^{n-1} \varphi_m \leq e^{h(L + C_2) n} \leq e^{h(L + C_2) N} = e^{(L + C_2)(b - a)}.
\end{equation*}
Assuming, \( E_0 = 0 \) and
\[
\|\tau\|_\infty = \max_{n = 0, \dots, N-1} |\tau_n|,
\]
it follows that for every \( n = 0, 1, \dots, N \),
\begin{equation*}
    E_n \leq h e^{(L + C_2) T} \sum_{j=0}^{n-1} \|\tau\|_\infty = n h e^{(L + C_2)(b - a)} \|\tau\|_\infty \leq (b - a) e^{(L + C_2)(b - a)} \|\tau\|_\infty.
\end{equation*}
Hence,
\begin{equation*}
\lim_{\substack{h \to 0 \\ N h = b - a}} E_n = 0.
\end{equation*}
\end{proof}
\begin{theorem} If $\epsilon_{n2}^2$ and $\epsilon_{n3}^2$
are bounded for all $n = 0, 1, \dots, N - 1$. Then the
three-stage RBF Runge-Kutta method (\ref{eq:imq3}), converges provided the method satisfies (\ref{eq:sumb}), (\ref{eq:sumbn}) and
(\ref{eq:rk_3_con}).
\end{theorem}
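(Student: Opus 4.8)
The plan is to mirror the three preceding convergence proofs, adapting the Lipschitz estimates to the IMQ stage structure of (\ref{eq:imq3}). First I would fix $t = t_n$ and introduce the stage maps $k_1(w) = f(t_n, w)$, $k_2(w) = f\bigl(t_n + c_2 h, \sqrt{1 + \epsilon_{n2}^2 c_2^2 h^2}\, h k_1(w) + w/\sqrt{1 + \epsilon_{n2}^2 c_2^2 h^2}\bigr)$, and $k_3(w) = f\bigl(t_n + c_3 h, \sqrt{1 + \epsilon_{n3}^2 c_3^2 h^2}\, h(a_{31}k_1(w) + a_{32}k_2(w)) + w/\sqrt{1 + \epsilon_{n3}^2 c_3^2 h^2}\bigr)$, and write $u_{n+1} = u_n + h(b_1 k_1(u_n) + b_2 k_2(u_n) + b_3 k_3(u_n)) + h\tau_n$, where $\tau_n$ is the local truncation error already computed for the method.

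Next I would derive the Lipschitz bounds stage by stage. Writing $\sigma_i = \sqrt{1 + \epsilon_{ni}^2 c_i^2 h^2}$ for $i = 2, 3$, the trivial bound $|k_1(u_n) - k_1(v_n)| \le L|u_n - v_n|$ feeds into $|k_2(u_n) - k_2(v_n)| \le L(\sigma_2 h L + \sigma_2^{-1})|u_n - v_n|$, which in turn feeds into the nested estimate $|k_3(u_n) - k_3(v_n)| \le L\bigl(\sigma_3 h(a_{31}L + a_{32}L(\sigma_2 hL + \sigma_2^{-1})) + \sigma_3^{-1}\bigr)|u_n - v_n|$. Substituting these into $E_{n+1} = |u_{n+1} - v_{n+1}| \le |u_n - v_n| + h\sum_i b_i|k_i(u_n) - k_i(v_n)| + h|\tau_n|$ yields the one-step recursion $E_{n+1} \le \phi_n E_n + h|\tau_n|$ with an explicit amplification factor $\phi_n$ collecting the three bracketed terms.

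The heart of the argument is to show $\phi_n \le e^{h(L + \mathcal{C}_3)}$ for a constant $\mathcal{C}_3$ independent of $h$ and $n$. Using $b_1 + b_2 + b_3 = 1$ and $1 + hL \le e^{hL}$, I would peel off the leading $e^{hL}$ and exhibit the remainder as $\mathcal{C}_3 h\, e^{hL}$, where $\mathcal{C}_3$ is defined as the supremum over $0 < h \le b - a$ and $n$ of the remaining $O(1)$ combination of $\sigma_2, \sigma_3$ and their reciprocals divided by $h$; this supremum is finite precisely because $\epsilon_{n2}^2, \epsilon_{n3}^2$ are assumed bounded (so each $\sigma_i$ stays bounded) and because the reciprocals satisfy $\sigma_i^{-1} \le 1$. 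The remaining step is routine: iterating the recursion gives $E_n \le \bigl(\prod_{j=0}^{n-1}\phi_j\bigr)E_0 + h\sum_j(\cdots)|\tau_{j-1}|$, and since $\prod \phi_j \le e^{(L + \mathcal{C}_3)(b-a)}$, the choice $E_0 = 0$ produces $E_n \le (b-a)e^{(L + \mathcal{C}_3)(b-a)}\|\tau\|_\infty$, whence $E_n \to 0$ as $h \to 0$ with $Nh = b - a$.

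The main obstacle I anticipate is the bookkeeping introduced by the IMQ argument, in which the slope contribution is scaled by the growing factor $\sigma_i$ while the carried value is scaled by the shrinking factor $\sigma_i^{-1}$, together with the nesting of $k_2$ inside $k_3$ that produces a product $\sigma_2 \sigma_3$. The delicate point is verifying that this product, and the mixed terms it generates, remain uniformly bounded so that $\mathcal{C}_3 < \infty$ and the $e^{hL}$ collapse goes through; here the boundedness hypothesis on both shape parameters, rather than being cosmetic, is exactly what guarantees a finite amplification constant.
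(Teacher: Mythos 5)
Your proposal follows essentially the same route as the paper's proof: the same stage maps $k_i(w)$, the same nested Lipschitz bounds producing the one-step recursion $E_{n+1}\le \phi_n E_n + h|\tau_n|$, the same peeling of $e^{hL}$ from $\phi_n$ with a finite constant $\mathcal{C}_3$ guaranteed by the boundedness of $\epsilon_{n2}^2$ and $\epsilon_{n3}^2$, and the same discrete Gronwall iteration to conclude $E_n\to 0$. The only caveat is a wording slip: the remainder after extracting $1+hL$ must be $O(h^2)$ (it is, since the $\sigma_i^{-1}$ terms are absorbed using $\sum_i b_i = 1$ and the surviving terms carry factors $h^2L^2$ and $h^3L^3$), not merely an ``$O(1)$ combination divided by $h$,'' for the supremum defining $\mathcal{C}_3$ to be finite.
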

\begin{proof} Suppose, $E_{n+1}$ be the error $E_{n+1}$  between $u_{n+1}$ and $v_{n+1}$ for fix $t = t_n$. Now,
\begin{align*}
k_1(w) &= f(t_n, w), \\
k_2(w) &= f\left(t_n + c_2h, \sqrt{(1 + \epsilon_{n2}^2 a_{21}^2 h^2)} \,  h a_{21}k_1 + \frac{w} {\sqrt{1 + \epsilon_{n2}^2 a_{21}^2 h^2}}\right), \\
k_3(w) &= f\left(t_n + c_3h, \sqrt{(1 + \epsilon_{n3}^2 c_{3}^2 h^2)} \, h (a_{31}k_1 + a_{32}k_2)+\frac{w} {\sqrt{1 + \epsilon_{n3}^2 c_3^2 h^2}}\right).
\end{align*}
$u_{n+1}$ can be expressed as
\begin{align*}
 u_{n+1} = u_n + h (b_1k_1(u_n) + b_2k_2(u_n) + b_3k_3(u_n)) + h \tau_n,
\end{align*}
where, $\tau_n$ is trancation error.
 Since $f(t,u)$ is Lipschitz continuous in u , so
 \begin{equation*}
     |k_1(u_n) - k_1(v_n)|\leq L|u_n - v_n|,
 \end{equation*}

\begin{equation*}
     |k_2(u_n) - k_2(v_n)|\leq L\left|\sqrt{1 + \epsilon_{n2}^2a_{21}^2h^2}\,\,hk_1a_{21} +\frac{1}{\sqrt{1 +\epsilon_{n2}^2a_{21}^2h^2}}\right||u_n - v_n|,
 \end{equation*}
 \begin{align*}
|k_3(u_n) - k_3(v_n)| 
&= \Bigg| 
f\left(t_n + c_3 h,\ \sqrt{1 + \epsilon_{n3}^2 c_3^2 h^2} \,\, h (a_{31} k_1(u_n) + a_{32} k_2(u_n)) 
+ \frac{u_n}{\sqrt{1 + \epsilon_{n3}^2 c_3^2 h^2}} \right) \\
&\qquad - 
f\left(t_n + c_3 h,\ \sqrt{1 + \epsilon_{n3}^2 c_3^2 h^2} \,\,  h (a_{31} k_1(v_n) + a_{32} k_2(v_n)) 
+ \frac{v_n}{\sqrt{1 + \epsilon_{n3}^2 c_3^2 h^2}} \right)
\Bigg|, \\
&\leq L \Bigg| 
\sqrt{1 + \epsilon_{n3}^2 c_3^2 h^2}  h \left( a_{31}(k_1(u_n) - k_1(v_n)) 
+ a_{32}(k_2(u_n) - k_2(v_n)) \right) 
+ \frac{u_n - v_n}{\sqrt{1 + \epsilon_{n3}^2 c_3^2 h^2}} 
\Bigg|, \\
&\leq L \left[ 
\sqrt{1 + \epsilon_{n3}^2 c_3^2 h^2} \,\, h a_{31} L |u_n - v_n| 
+ \sqrt{1 + \epsilon_{n2}^2 a_{21}^2 h^2} \,\,  h a_{32} L |u_n - v_n| \right. \\
&\quad \left. +  \frac{L|u_n - v_n|}{\sqrt{1 + \epsilon_{n2}^2 a_{21}^2 h^2}}
+ \frac{|u_n - v_n|}{\sqrt{1 + \epsilon_{n3}^2 c_3^2 h^2}} 
\right],
\end{align*}
where, $L$ is Lipschitz constant. 
Now,
\begin{align*}
    E_{n+1} & = |u_{n+1} - v_{n+1}|, \\
& = |(u_n - v_n) + h \left[ b_1(k_1(u_n) - k_1(v_n)) + b_2(k_2(u_n) - k_2(v_n))\right ] + h\tau_n|, \\
& \leq |u_n - v_n| + hb_1L|u_n - v_n| + b_2hL\left(\sqrt{1 + \epsilon_{n2}^2a_{21}^2h^2}\,\,hLa_{21} +  \frac{1}{\sqrt{1 +\epsilon_{n2}^2a_{21}^2h^2}}\right)|u_n - v_n| \\
& + b_3hL \Biggr[ \sqrt{1 + \epsilon_{n3}^2h^2c_3^2}\,\,ha_{31}L  + \sqrt{1+\epsilon_{n3}h^2c_3^2}\,\,ha_{32} \Biggr(L^2\sqrt{1 + \epsilon_{n2}^2a_{21}^2h^2}\,\,ha_{21} + \frac{L}{\sqrt{1 + \epsilon_{n2}^2a_{21}^2h^2}}\Biggl)\Biggr]|u_n - v_n|. 
\end{align*}
Now,
\begin{align*}
    \phi_n &= 1 + b_1 h L 
    + h b_2 L \left( \sqrt{1 + \epsilon_{n2}^2 a_{21}^2 h^2}\,\, h L a_{21} + \frac{1}{\sqrt{1 + \epsilon_{n2}^2 h^2 a_{21}^2}} \right) \\
    &\quad + b_3 h L \left[ \sqrt{1 + \epsilon_{n3}^2 h^2 c_3^2}\,\, h a_{31} L 
    + \sqrt{1 + \epsilon_{n3}^2 h^2 c_3^2}\,\, h a_{32} \left( L^2 \sqrt{1 + \epsilon_{n2}^2 a_{21}^2 h^2}\,\, h a_{21} 
    + \frac{L}{\sqrt{1 + \epsilon_{n2}^2 a_{21}^2 h^2}} \right) \right], \\
    & = 1 + hL \left( b_1 + \frac{b_2}{\sqrt{1 + \epsilon_{n2}^2h^2a_{21}^2}} + \frac{b_3}{\sqrt{1 
     +\epsilon_{n3}^2h^2c_3^2}}\right) + h^2L^2\left(b_2a_{21}\sqrt{1 + \epsilon_{n2}^2a_{21}^2h^2} +  b_3a_{31}\sqrt{1 + \epsilon_{n3}^2h^2c_3^2} \right. \\
     & \left. + a_{32}b_3\sqrt{1 + \epsilon_{n3}^2h^2c_3^2}\sqrt{1 + \epsilon_{n2}^2a_{21}^2h^2}\right) + b_3L^3h^3a_{21}a_{32}\sqrt{1 + \epsilon_{n3}^2h^2c_3^2}\sqrt{1 + \epsilon_{n2}^2h^2a_{21}^2}, \\
     & \leq 1 + hL + \frac{h^2L^2}{2} - \frac{h^2L^2}{2} \left( b_2a_{21}\sqrt{1 + \epsilon_{n2}^2a_{21}^2h^2} +  b_3a_{31}\sqrt{1 + \epsilon_{n3}^2h^2c_3^2} \right. \\
     & \left. + a_{32}b_3\sqrt{1 + \epsilon_{n3}^2h^2c_3^2} \right) + b_3L^3h^3a_{21}a_{32}\sqrt{1 + \epsilon_{n3}^2h^2c_3^2}\sqrt{1 + \epsilon_{n2}^2h^2a_{21}^2}, \\
      & \leq (1 + \mathcal{C}_3 h) e^{hL}, \\
      & \leq e^{h(1+\mathcal{C}_3)},
\end{align*}
where,
 \begin{align*}
    \mathcal{C}_3 = \sup_{\substack{0 < h \leq b - a \\ n = 0, \dots, N - 1}} 
    \frac{A}{ e^{hL} },
\end{align*}
with 
\begin{align*}
A=  & h^2 L^3 b_3 c_{2} a_{32} \sqrt{1 + \epsilon_{n3}^2 c_3^2 h^2} \sqrt{1 + \epsilon_{n2}^2 c_{2}^2 h^2} \\
&\qquad- \frac{h^2L^2}{2} \biggl( b_2 c_{2} \sqrt{1+\epsilon_{n2}^2 c_{2}^2 h^2}+ b_3 a_{31} \sqrt{1 + \epsilon_{n3}^2 h^2 c_3^2} 
          + b_3 a_{32} \sqrt{1+ \epsilon_{n3}^2 c_3^2 h^2} \biggr).
\end{align*}
Here, $b_3$, $c_2$, $c_3$, $a_{32}$ is constant and $\epsilon_{n2}^2$ and $\epsilon_{n3}^2$ is bounded as it depends on $u$ and $u$ is bounded ensured by Lipschitz continuity. Therefore the term $E_n$ satisfies 
\begin{equation*}
    E_n \leq \sum_{j=0}^{n-1} \varphi_j E_0 + h \sum_{j=1}^{n-1} \left( \sum_{m=j}^{n-1} \varphi_m \right) |\tau_{j-1}| + h |\tau_{n-1}|.
\end{equation*}
Clearly,
\begin{equation*}
    \sum_{m=j}^{n-1} \varphi_m \leq \sum_{m=0}^{n-1} \varphi_m \leq e^{h(L + C_2) n} \leq e^{h(L + C_2) N} = e^{(L + C_2)(b - a)}.
\end{equation*}
With \( E_0 = 0 \) and
\[
\|\tau\|_\infty = \max_{n = 0, \dots, N-1} |\tau_n|,
\]
it can be concluded that for every \( n = 0, 1, \dots, N \),
\begin{equation*}
    E_n \leq h e^{(L + C_2) T} \sum_{j=0}^{n-1} \|\tau\|_\infty = n h e^{(L + C_2)(b - a)} \|\tau\|_\infty \leq (b - a) e^{(L + C_2)(b - a)} \|\tau\|_\infty.
\end{equation*}
Hence,
\begin{equation*}
\lim_{\substack{h \to 0 \\ N h = b - a}} E_n = 0.
\end{equation*}
\end{proof}
\section{Stability Regions} 
Stability regions indicates to the set of values in complex plane where absolute value of the stability polynomial is bounded by one. $y' = \lambda y $ is taken as the test function. Substituting the test function in the expression of the method and replacing  $\lambda h$ by $ z $, $h$ is step length, we obtain ratio of $v_{n+1}$ and $v_n$. The ratio is termed as stability polynomial and it is in the form $v_{n+1} = R(z)v_n $, where $R(z)$ is stability polynomial. Stability region is defined by $|R(z)| \leq 1 $. Bounding the stability polynomial by one ensures that the error will not grow unboundedly. 
Stability polynomials of the two, three, and four stage classical Runge-Kutta methods are as follows
\begin{align}
    R(z) &= 1 + z + \frac{1}{2}z^2, \\
    R(z) &= 1 + z + \frac{1}{2}z^2 + \frac{1}{6}z^3, 
\end{align}
which in general indicate the absolute stability region. The stability polynomial for two stage MQ-RBF Runge-Kutta method (\ref{mq_rk2}) is given by 
\begin{equation*}
    R(z) = 1 + z + \frac{z^2}{2} + \frac{z^3}{6} + \frac{z^4}{9}.
\end{equation*}
The stability polynomial for three stage MQ-RBF Runge-Kutta Method (\ref{eq:mq3}) are obtained as
\begin{enumerate} [label=\Roman*.]
    \item 
    \begin{equation*}
        R(z) = 1 + \frac{z}{10} + \frac{z}{2}\sqrt{1 + \frac{z^2}{3}} + \frac{2}{5}\left[2z -\frac{5}{12}z^2 + \frac{5z^2}{4}(1 + \frac{z}{3}\sqrt{1 + \frac{z^2}{3}} \right],
    \end{equation*}
    \item 
    \begin{equation*}
        R(z) = 1 + \frac{2z}{9} + \frac{z}{2}(1 + \frac{z}{3})\sqrt{1 + \frac{z^2}{3}} + \frac{2}{5}\left[2z + \frac{3}{4}(z^2 + \frac{z^2}{3})\sqrt{1 + \frac{z^2}{3}} \right],
    \end{equation*}
    \item 
    \begin{equation*}
        R(z) = 1 + \frac{z}{6} + \frac{1}{6}(z + z^2)\sqrt{1 + \frac{z^2}{3}} + \frac{4z}{3} + \frac{z^2}{6} + \frac{1}{6}(z +z^2)\sqrt{1 + \frac{z^2}{3}} .
    \end{equation*}
\end{enumerate}
\noindent The stability polynomial of the two stage IMQ-RBF Runge-Kutta method (\ref{eq:imq_2}) is 
\begin{equation*}
    R(z) = 1 + \frac{z}{4} + \frac{1}{2} z^2  \sqrt{(1 - \frac{4 z^2}{9})} + \frac{3}{4}\dfrac{z}{\sqrt{(1 - \frac{4 z^2}{9}})}.
\end{equation*}
The stability polynomial of the three-stage IMQ-RBF Runge-Kutta method \eqref{eq:imq3} are
\begin{enumerate} [label=\Roman*.]
    \item 
    \begin{equation*}
      \begin{aligned}
       R(z) =\; & 1 + \frac{z}{6} 
       + \frac{2}{3}\left( z^2 \sqrt{1-\frac{z^2}{4}} + \frac{z}{\sqrt{1-\frac{z^2}{4}}} \right) \\
       & + \frac{1}{6} \left\{z\sqrt{1+z^2} \left( 2z^2\sqrt{1- \frac{z^2}{4}} - z + \frac{2z}{\sqrt{1 - \frac{z^2}{4}}} \right) + \frac{z}{\sqrt{1+z^2}} \right\},
       \end{aligned}
    \end{equation*}
      \item
      \begin{equation*}
        \begin{aligned}
     R(z) =\; & 1 + \frac{z}{10} 
      + \frac{1}{2}\left( z^2 \sqrt{1-\frac{z^2}{9}} + \frac{z}{\sqrt{1-\frac{z^2}{9}}} \right) \\
      & + \frac{2}{5} \left\{z\sqrt{1+\frac{5}{36}z^2} \left( -\frac{5}{12}z + \frac{5z}{4}\left(z\sqrt{1 - \frac{z^2}{9}} + \frac{1}{\sqrt{1-\frac{z^2}{9}}}\right) \right) + \frac{z}{\sqrt{1+\frac{5}{36}z^2}} \right\},
         \end{aligned}
       \end{equation*}
\item \begin{equation*}
\begin{aligned}
R(z) =\; & 1 + \frac{z}{6} 
+ \frac{1}{6}\left( z^2 \sqrt{1-z^2} + \frac{1}{\sqrt{1-z^2}} \right) \\
& + \frac{2}{3} \left\{z\sqrt{1+\frac{z^2}{4}} \left( \frac{z}{4} + z\sqrt{1 - z^2} + \frac{z}{\sqrt{1-z^2}} \right) + \frac{z}{\sqrt{1+\frac{z^2}{4}}} \right\},
\end{aligned}
\end{equation*}
\item
\begin{equation*}
\begin{aligned}
R(z) =\; & 1 + \frac{2z}{9} 
+ \frac{1}{3}\left( z^2 \sqrt{1-\frac{z^2}{5}} + \frac{z}{\sqrt{1-\frac{z^2}{5}}} \right) \\
& + \frac{4}{9} \left\{z\sqrt{1-\frac{4z^2}{15}} \left( \frac{3z^2}{4} \sqrt{1 - \frac{4z^2}{5}} + \frac{z}{\sqrt{1-\frac{z^2}{5}}} \right) + \frac{z}{\sqrt{1+\frac{4z^2}{15}}}\right\}.
\end{aligned}
\end{equation*}
\end{enumerate}
 Figure\ref{(23)} indicates the stability regions of the proposed methods in comparison to classical RK methods. Figure\ref{(23)} (a) illustrates stability regions of two stage Runge-Kutta and IMQ-Runge Kutta method. Note that both the stability regions are in the negative real axis. Figure \ref{(23)}(b) shows stability region for RK-2 and MQ-RK-2 method. Figure\ref{(23)} (c) shows the stability region for various IMQ-RK-3 and RK-3 methods. The stability region for the MQ-RK-3 and RK-3 methods is shown in graph Figure \ref{(23)}(d). In each case, it is evident that the stability region for the classical Runge-Kutta method is larger than all enhanced Runge-Kutta methods.
\begin{figure}
    \centering
     \begin{subfigure}[b]{0.48\textwidth}
         \centering
         \includegraphics[trim={11cm 0 0 3 cm},clip, width=12cm, height=7cm]{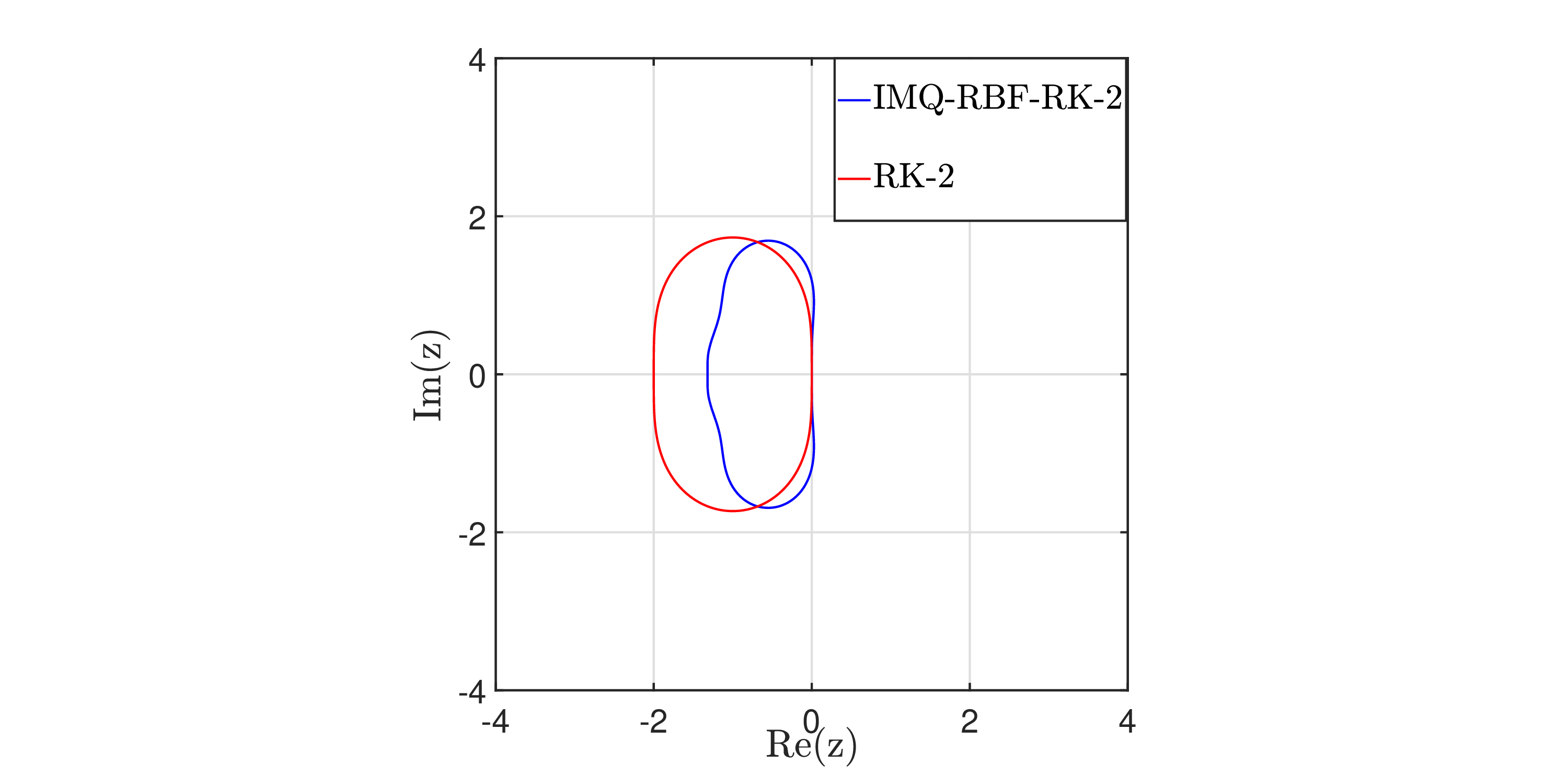}
         \caption{Stability regions of RK2 and IMQ-RK2.}
         \label{(22a))}
     \end{subfigure}
     \hfill
     \begin{subfigure}[b]{0.48\textwidth}
         \centering
         \includegraphics[trim={8cm 0cm 0 0},clip, width=12cm, height=7cm]{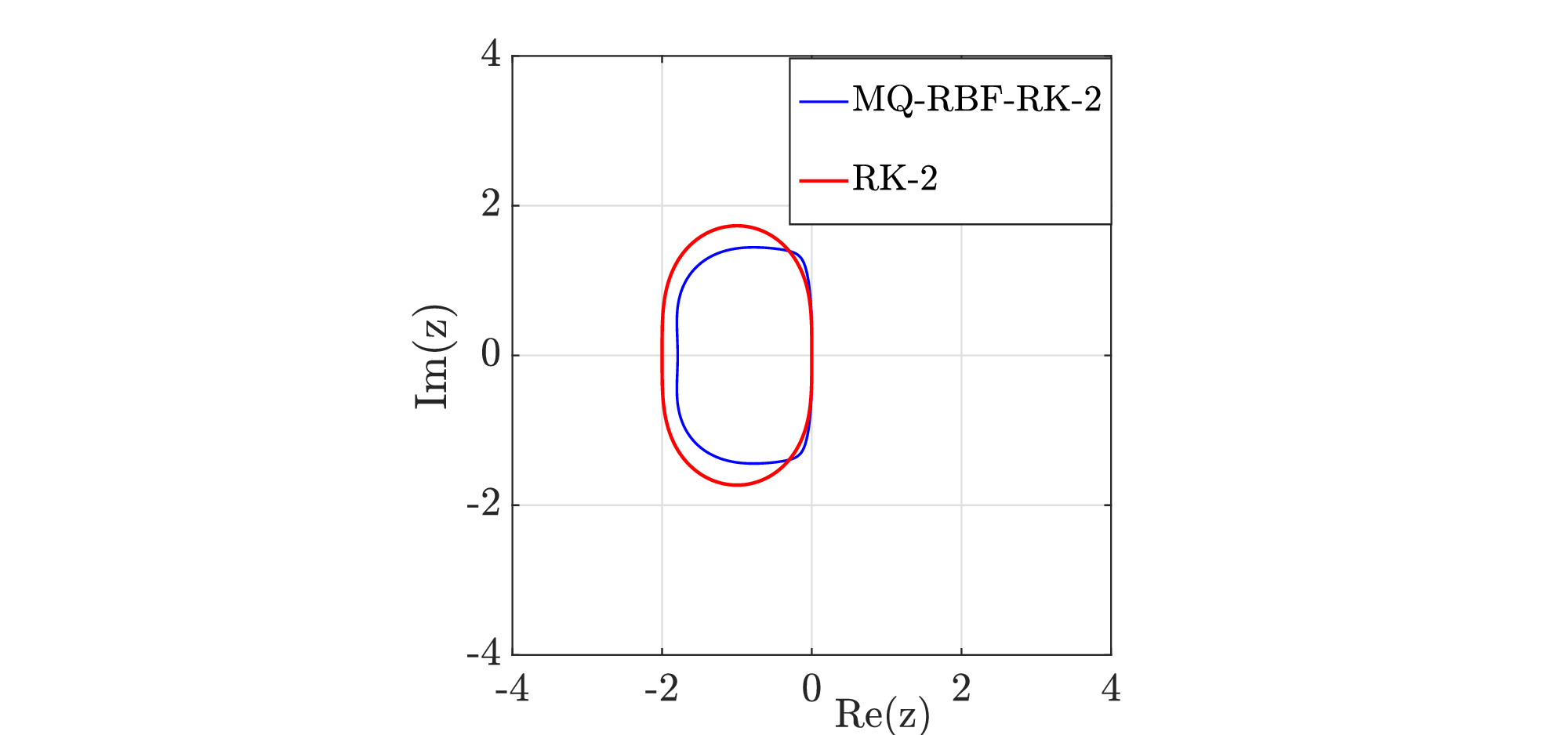}
         \caption{Stability regions of RK2  and MQ-RK2.}
         \label{(22b))}
     \end{subfigure}
     \medskip
    \centering
     \begin{subfigure}[b]{0.48\textwidth}
         \centering
         \includegraphics[trim={10cm 0 0 0 cm},clip, width=11.7cm, height=7cm]{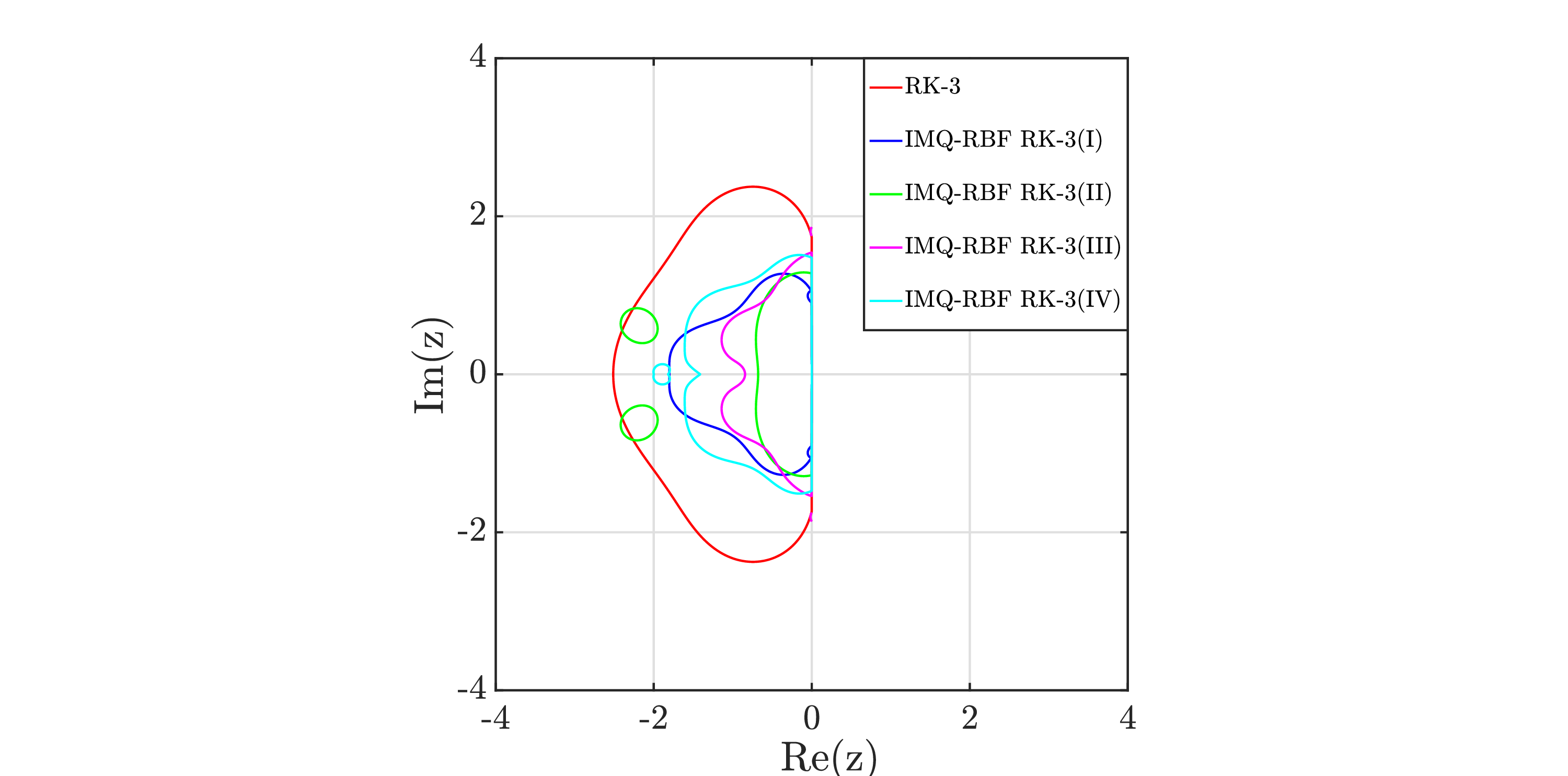}
         \caption{Stability regions of RK3  and IMQ-RK3.}
         \label{(23a)}
     \end{subfigure}
     \hfill
     \begin{subfigure}[b]{0.48\textwidth}
         \centering
         \includegraphics[trim={12cm 0 0 0},clip, width=11.8cm, height=7cm]{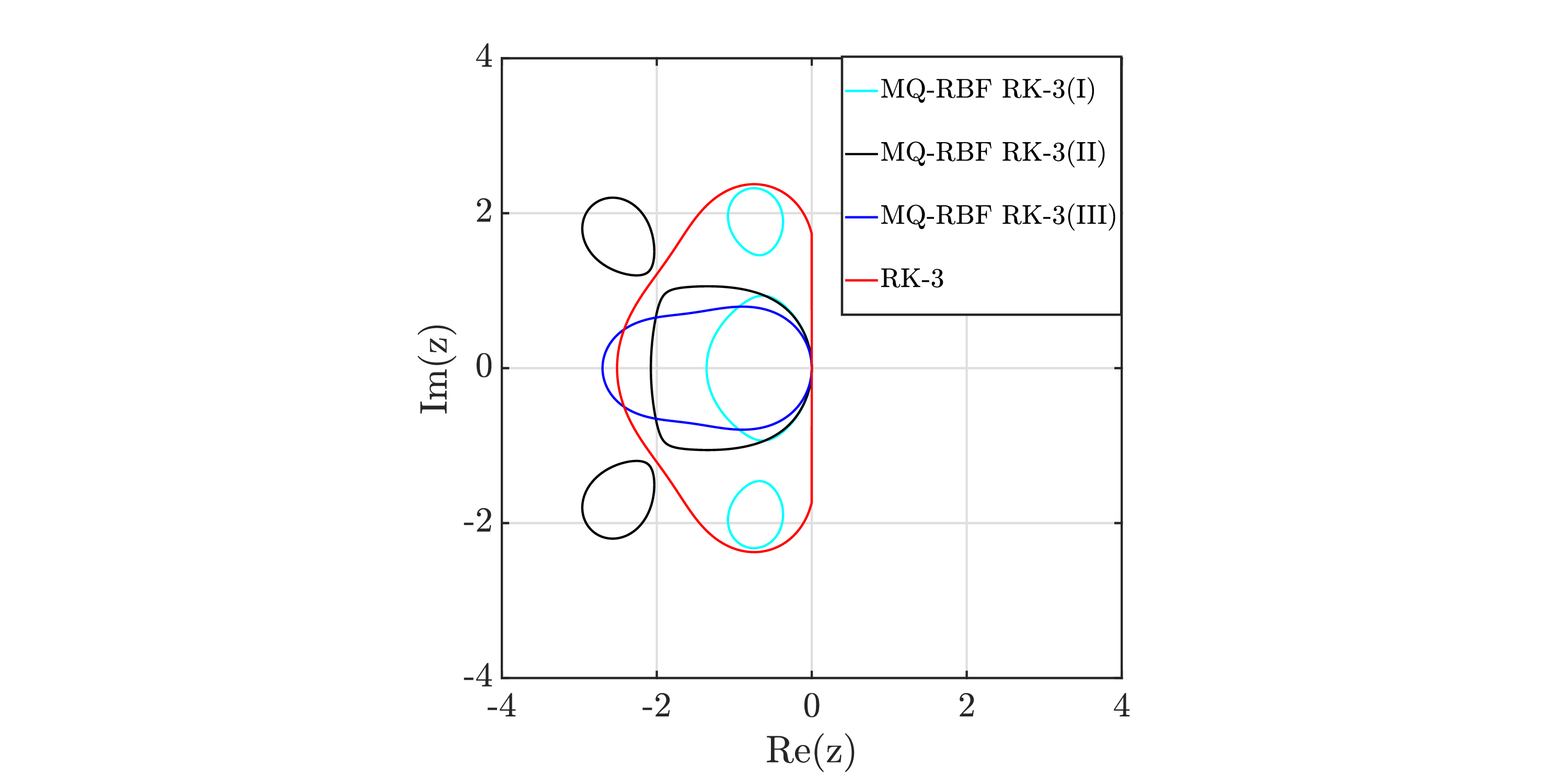}
         \caption{Stability regions of RK3  and MQ-RK3.}
         \label{(23b)}
     \end{subfigure}
     \caption{Stability region for various classical, MQ and IMQ -RBF Runge Kutta schemes}
     \label{(23)}
\end{figure}

\section{Numerical Results}
  In this section, numerical examples of MQ and IMQ RBF Runge-Kutta methods are presented and discussed in support of the theory presented in previous sections. All calculations are performed with MATLAB R2022A. A comparison is followed with the corresponding classical Runge-Kutta method. All the errors preseneted here are calculated using $L_1-$norm.
\section*{Example 6.1}
Consider the ordinary differential equation,
\begin{equation*}
    \frac{du}{dt} = -u^2, \quad 0 < t \leq 1,
\end{equation*}
with the initial condition;
\begin{equation*}
    u(0) = 1.
\end{equation*}
The exact solution of the above ordinary differential equation is $u(t) =\frac{1}{t+1}$, decreasing in nature in $0 < t\leq1$. The global error evaluated at 
$t=1$ is reported for the standard RK2 method and its enhanced variants, namely MQ-RK2 and IMQ-RK2 in Table \ref{tab:error_order_two}. Table 2 demonstrates that various RK3 methods achieve third-order accuracy, while different IMQ-RK3 methods attain fourth-order accuracy. Table 4 shows that the various MQ-RK3 methods described in Section 3 achieve fourth-order accuracy.
\begin{table}[htbp!]
    \centering
     \caption{Global error and order of accuracy by example 6.1 by RK2, MQ, IMQ-RK2.}
    \begin{tabular}{c c c c c c c}
        \toprule
        N & \multicolumn{2}{c}{RK2} & \multicolumn{2}{c}{MQ-RK2} & \multicolumn{2}{c}{IMQ-RK2} \\
        \cmidrule(lr){2-3} \cmidrule(lr){4-5} \cmidrule(lr){6-7}
          & Error & Order & Error & Order & Error & Order\\
        \midrule
        10  & 1.119140e-03   & --   & 9.316803e-06 & -- & 1.594597e-04  & --   \\
        20  & 2.628612e-04  & 2.0900 & 1.487789e-06  & 2.6467 & 1.763600e-05  & 3.1766 \\
        40  &  6.368993e-05  & 2.0452 & 2.026835e-07 & 2.8759 & 2.074312e-06 & 3.0878 \\
        80  & 1.567527e-05  & 2.0226 & 2.626486e-08 &2.9480& 2.516187e-07  & 3.0433 \\
        160 & 3.888293e-06 & 2.0113 & 3.338011e-09  & 2.9761  &3.098107e-08    & 3.0218 \\
        320 & 9.682818e-07 & 2.0056 &  4.205879e-10   & 2.9885 & 4.205879e-10 & 3.0109 \\
        \bottomrule
    \end{tabular}
    \label{tab:error_order_two}
\end{table}
\begin{table}[htbp!]
\centering
\caption{Global error and order of accuracy for example 6.1 by RK3 and IMQ-RK3.}
\begin{tabular}{c c c c c c c c c}
\toprule
 N & \multicolumn{2}{c}{RK3 I} & \multicolumn{2}{c}{IMQ-RK3 I} & \multicolumn{2}{c}{RK3 II} & \multicolumn{2}{c}{IMQ-RK3 II} \\
\cmidrule(lr){2-3} \cmidrule(lr){4-5} \cmidrule(lr){6-7}  \cmidrule(lr){8-9}
& Error & Order & Error & Order & Error & Order & Error & Order \\
\midrule
10  & 2.642520e-05 &       & 4.633848e-06 &       & 4.783447e-05 &       & 1.592061e-06 &       \\
20  & 2.916366e-06 & 3.1797 & 2.573850e-07 & 4.1692 & 5.580618e-06 & 3.0999 & 9.390044e-08 & 4.0836 \\
40  & 3.439349e-07 & 3.0840 & 1.509396e-08 & 4.0927 & 6.738067e-07 & 3.0246 & 5.681100e-09 & 4.0469 \\
80  & 4.181551e-08 & 3.0400 & 9.130775e-10 & 4.0235 & 8.280145e-08 & 3.0152 & 3.492980e-10 & 4.0236 \\
160 & 5.155666e-09 & 3.0185 & 5.614342e-11 & 4.0235 & 1.026114e-08 & 3.0125 & 2.165412e-11 & 4.0117 \\
320 & 6.400902e-10 & 3.0098 & 3.480549e-12 & 4.0117 & 1.277089e-09 & 3.0063 & 1.347748e-12 & 4.0063 \\
\midrule
N & \multicolumn{2}{c}{RK3 III} & \multicolumn{2}{c}{IMQ-RK3 III} & \multicolumn{2}{c}{RK3 IV} & \multicolumn{2}{c}{IMQ-RK3 IV} \\
\cmidrule(lr){2-3} \cmidrule(lr){4-5} \cmidrule(lr){6-7} \cmidrule(lr){8-9}
& Error & Order & Error & Order & Error & Order & Error & Order \\
\midrule
10  & 4.716981e-05 &       & 5.617946e-06 &       & 4.786103e-05 &       & 2.274155e-06 &       \\
20  & 5.548240e-06 & 3.0878 & 3.166580e-07 & 4.1490 & 5.583189e-06 & 3.0997 & 1.311771e-07 & 4.1157 \\
40  & 6.720259e-07 & 3.0454 & 1.870708e-08 & 4.0813 & 6.739872e-07 & 3.0503 & 7.847559e-09 & 4.0631 \\
80  & 8.270012e-08 & 3.0226 & 1.136403e-09 & 4.0140 & 8.281589e-08 & 3.0247 & 4.798086e-10 & 4.0317 \\
160 & 1.025485e-08 & 3.0116 & 7.002599e-11 & 4.0204 & 1.026189e-08 & 3.0166 & 2.966272e-11 & 4.0157 \\
320 & 1.276703e-09 & 3.0058 & 4.344858e-12 & 4.0105 & 1.277132e-09 & 3.0063 & 1.844919e-12 & 4.0076 \\
\bottomrule
\end{tabular}
\end{table}
\begin{table}[htbp!]
    \centering
     \caption{Global error and order of accuracy for example 6.1 by various MQ-RK3 methods.}
    \begin{tabular}{c c c c c c c}
        \toprule
        N & \multicolumn{2}{c}{MQ-RK3 I}  & \multicolumn{2}{c}{MQ-RK3 II} & \multicolumn{2}{c}{MQ-RK3 III} \\
        \cmidrule(lr){2-3} \cmidrule(lr){4-5}  \cmidrule(lr){6-7}
          & Error & Order & Error & Order & Error & Order \\
        \midrule
        10  &  4.536178e-06    & -- & 1.609152e-06  & -- & 3.020182e-06       & --    \\
        20  & 2.720225e-07    & 4.0597 &  9.790250e-08  &  4.0388 & 1.799044e-07 &  4.0693 \\
        40  & 1.655922e-08   &   4.0380&  6.007887e-09  & 4.0264 & 1.092505e-08     &  4.0415 \\
        80  &1.020717e-09    &   4.0200 &  3.718700e-10 & 4.0140 &  6.727541e-10   &   4.0214 \\
        160 &  6.334999e-11  &   4.0101& 2.312794e-11  &  4.0071 &  4.173539e-11       & 4.0107\\
        320 & 3.945511e-12 & 4.0051 &1.441736e-12  &    4.0038 &  2.599143e-12  &  4.0052\\
        \bottomrule
    \end{tabular}
\end{table}
\section*{Example 6.2}
Next, we consider the ordinary differential equation  as
\begin{equation*}
    \frac{du}{dt} = -4t^3 u^2, \quad -10 < t \leq 0, \quad u(-10) = \frac{1}{10001}.
\end{equation*}
This is a stiff problem with the exact solution $u(t) = \frac{1}{t^4+1}$
where the solution exhibits rapid variation over the interval $[-10, 0]$. The global error is evaluated at $t = 0$. Table 4 shows that the numerical solution achieves second-order accuracy with the standard RK2 method and third-order accuracy with the enhanced variants MQ-RK2 and IMQ-RK2. As shown in Table 5, the various RK3 and IMQ-RK3 methods yield third and fourth order accuracy, respectively. These RK3 and IMQ-RK3 methods are discussed in Sections 2 and 3, respectively. Table 6 presents the global error and confirms fourth-order accuracy for the ODE using various MQ-RBF methods described in Section 3.
\begin{table}[htbp!]
    \centering
    \caption{Global error and order of accuracy for example 6.2 by RK2, MQ-RK2 and IMQ-RK2}
    \begin{tabular}{c c c c c c c}
        \toprule
        N & \multicolumn{2}{c}{RK2} & \multicolumn{2}{c}{MQ-RK2} & \multicolumn{2}{c}{IMQ-RK2} \\
        \cmidrule(lr){2-3} \cmidrule(lr){4-5} \cmidrule(lr){6-7}
          & Error & Order & Error & Order & Error & Order \\
        \midrule
        200  &  9.422354e-02    & -- & 3.148990e-02  & --  & 9.422354e-02   & --    \\
        400  & 1.344393e-02  & 2.0900 & 4.055329e-03 & 2.9570 & 1.344393e-02  & 2.8091 \\
        800  &  1.738854e-03  & 2.0452 & 5.197901e-04 &2.9638
 & 1.738854e-03 & 2.9507\\
        1600  & 2.201011e-04  & 2.0226 & 6.588221e-05 & 2.9800 & 2.201011e-04   & 2.9819\\
        3200 &  2.766968e-05 & 2.0113 &8.294091e-06  & 2.9897 & 2.766968e-05    & 2.9918 \\
        6400 &3.468332e-06 & 2.0056 & 1.040483e-06 &2.9948 & 3.468332e-06 & 2.9960\\
        \bottomrule
    \end{tabular}
\end{table}
\begin{table}[htbp!]
    \centering
     \caption{Global error and order of accuracy for example 6.2 by RK3, IMQ-RK3.}
    \begin{tabular}{c c c c c c c c c}
        \toprule
        N & \multicolumn{2}{c}{RK3 I} & \multicolumn{2}{c}{IMQ-RK3 I} & \multicolumn{2}{c}{RK3 II} & \multicolumn{2}{c}{IMQ-RK3 II}\\
        \cmidrule(lr){2-3} \cmidrule(lr){4-5} \cmidrule(lr){6-7} \cmidrule(lr){8-9}
          & Error & Order & Error & Order & Error & Order & Error & Order\\
        \midrule
        200 & 4.341821e-02   &--   & 1.599661e-03 & -- & 4.344246e-02    & --& 4.063955e-04  & --  \\
        400 &  5.854329e-03   & 2.8907   & 1.047616e-04 & 3.9326 &  5.826218e-03   &  2.8985& 2.661830e-05 &  3.9324\\
        800 &  7.493172e-04   &  2.9659   & 6.705566e-06 & 3.9656 & 7.433671e-04 & 2.9704 & 1.703303e-06 & 3.9660 \\
       1600 &  9.460367e-05    & 2.9856  & 4.241174e-07 & 3.9828 & 9.369507e-05 & 2.9880&  1.077257e-07   & 3.9829\\
       3200 &  1.188195e-05  &  2.9931  & 2.666491e-08 &  3.9914 & 1.175767e-05  & 2.9944&6.777914e-09  &  3.9904 \\
       6400 &  1.488713e-06    & 2.9966  &  1.690263e-09 & 3.9796 & 1.472522e-06  &  2.9972& 4.157718e-10  & 4.0270 \\
       \midrule
        N & \multicolumn{2}{c}{RK3 III} & \multicolumn{2}{c}{IMQ-RK3 III} & \multicolumn{2}{c}{RK3 IV} & \multicolumn{2}{c}{IMQ-RK3 IV}\\
        \cmidrule(lr){2-3} \cmidrule(lr){4-5} \cmidrule(lr){6-7} \cmidrule(lr){8-9}
          & Error & Order & Error & Order & Error & Order  & Error & Order \\
        \midrule
200 & 6.749350e-02 &-- &  2.373662e-03  & -- & 4.768849e-02   &-- & 7.207373e-04  &--\\
400 & 9.267981e-03 &   2.8644  &   1.666272e-04 & 3.8324 &6.425003e-03  & 2.8919 & 4.757574e-05 & 3.9212 \\
800 & 1.186675e-03  &  2.9653  &  1.068826e-05  &  3.9625 &8.204686e-04 & 2.9692 &3.055810e-06  & 3.9606   \\
1600 &1.496573e-04    &  2.9872   &  4.660953e-07 & 4.5193 &1.034367e-04 & 2.9877 & 1.936161e-07  & 3.9803   \\
3200 &  1.878283e-05  &  2.9942   & 2.926613e-08 & 3.9933 &1.298126e-05 & 2.9942& 1.219565e-08   &  3.9888   \\
6400 & 2.352475e-06  &  2.9972  & 1.852862e-09  & 3.9814 & 1.625848e-06 &  2.9972& 7.586857e-10  &   4.0067    \\
\bottomrule
    \end{tabular}
\end{table}

\begin{table}[htbp!]
    \centering
    \caption{Global error and order of accuracy for example 6.2 by various MQ-RK3 methods.}
    \begin{tabular}{c c c c c c c}
        \toprule
        N & \multicolumn{2}{c}{MQ-RK3 I}  & \multicolumn{2}{c}{MQ-RK3 II} & \multicolumn{2}{c}{MQ-RK3 III} \\
        \cmidrule(lr){2-3} \cmidrule(lr){4-5}  \cmidrule(lr){6-7}
          & Error & Order & Error & Order & Error & Order \\
        \midrule
        200  & 4.647968e-04    & -- &4.766731e-04 & -- & 1.093432e-04    & --    \\
        400  &2.913064e-05    &3.9960 & 3.310703e-05 &  3.8478 &  5.926462e-06  &  4.2055 \\
        800  &1.821534e-06    &   3.9993 & 2.212692e-06   & 3.9033 &  3.397134e-07 &  4.1248 \\
        1600  & 1.138538e-07      &   3.9999 &  1.411230e-07  & 3.9708&  2.024166e-08     &   4.0689 \\
        3200 &  7.122291e-09 &  3.9987& 8.862087e-09&  3.9932 &  1.230801e-09   & 4.0397 \\
        6400 & 4.345316e-10   &  4.0348 & 5.673884e-10     &  3.9652&  8.104606e-11  & 3.9247\\
        \bottomrule
    \end{tabular}
\end{table}
\section*{Example 6.3}
Consider
\begin{equation*}
    \frac{du}{dt} =  \frac{2t^2 - u}{t^2u-t}, \quad 1 < t \leq 2, \quad
    u(1) = 2.
\end{equation*}
The non-separable ordinary differential equation has the exact solution 
$u(t) = \frac{1}{t} + \sqrt{\frac{1}{t}^2 + 4t -4},$ which is increasing over the interval $t = 1$ to $t = 2$. The global error at the final time $t = 2$ is computed using RK2, MQ-RK2, and IMQ-RK2 methods, as shown in Table 7. The order of accuracy is also calculated for these methods. RK2 achieves second-order accuracy, while MQ-RK2 and IMQ-RK2 attain third-order accuracy. Table 8 presents the global errors for various RK3 and IMQ-RK3 methods. The different IMQ-RK3 variants discussed in Section 3 outperform the standard RK3 methods in terms of both global error and accuracy. Table 9 shows the solution of the above ODE using various MQ-RK3 methods, demonstrating fourth-order accuracy.

\begin{table}[htbp!]
    \centering
     \caption{Global error and order of accuracy for example 6.3 by RK2, MQ-RK2 and IMQ-RK2.}
    \begin{tabular}{c c c c c c c}
        \toprule
        N & \multicolumn{2}{c}{RK2}  & \multicolumn{2}{c}{MQ-RK2} & \multicolumn{2}{c}{IMQ-RK2} \\
        \cmidrule(lr){2-3} \cmidrule(lr){4-5}  \cmidrule(lr){6-7}
          & Error & Order & Error & Order & Error & Order \\
        \midrule
        10  & 8.789865e-04   & -- &2.184352e-04 & -- & 2.106559e-04    & --    \\
        20  & 2.076179e-04   & 2.0819 & 2.542775e-05 & 3.1027 & 2.386215e-05  &  3.1421 \\
        40  &5.048556e-05    &  2.0400 & 3.064144e-06 &3.0528 & 2.836513e-06  & 3.0725 \\
        80  & 1.244991e-05   & 2.0197 & 3.757647e-07 & 3.0276 & 3.460363e-07  & 3.0351 \\
        160 & 3.091751e-06  & 2.0096 & 4.652147e-08&  3.0139 & 4.272868e-08    & 3.0176 \\
        320 & 7.703476e-07 & 2.0048 &5.787081e-09  &  3.0070 & 5.308747e-09  & 3.0088\\
        \bottomrule
    \end{tabular}
\end{table}
\begin{table}[h]
    \centering
    \caption{Global error and order of accuracy for example 6.3 by RK3 and IMQ-RK3.}
    \begin{tabular}{c c c c c c c c c}
        \toprule
        N & \multicolumn{2}{c}{RK3 I} & \multicolumn{2}{c}{IMQ-RK3 I} & \multicolumn{2}{c}{RK3 II} & \multicolumn{2}{c}{IMQ-RK3 II}\\
        \cmidrule(lr){2-3} \cmidrule(lr){4-5} \cmidrule(lr){6-7} \cmidrule(lr){8-9}
          & Error & Order & Error & Order & Error & Order & Error & Order\\
        \midrule
        10 & 6.819460e-06 & --& 9.102354e-07 &-- & 5.447556e-05 &-- &1.036981e-05  &-- \\
        20 &  1.559008e-06  & 2.1290 &7.556264e-08  &3.5905 & 6.523486e-06  & 3.0619 & 1.007890e-06 & 3.3630 \\
        40  & 2.280679e-07 & 2.7731 & 5.461295e-09   & 3.7904&7.935154e-07 & 3.0393 &  6.019957e-08  &  4.0654 \\
        80  & 3.019340e-08 &  2.9172 &3.659935e-10  &3.8994  &9.775898e-08 & 3.0210 & 7.278295e-08   &  -0.2738\\
        160 &  3.869651e-09 & 2.9640 &  2.366685e-11 & 3.9509 &1.212638e-08  &  3.0111 & 2.518786e-09 & 4.8528 \\
        320 & 4.891940e-10 & 2.9837 & 1.504130e-12 & 3.9759 &1.509893e-09 &3.0056 & 1.009757e-10 & 4.6406\\
       \midrule
        N & \multicolumn{2}{c}{RK3 III} & \multicolumn{2}{c}{IMQ-RK3 III} & \multicolumn{2}{c}{RK3 IV} & \multicolumn{2}{c}{IMQ-RK3 IV}\\
        \cmidrule(lr){2-3} \cmidrule(lr){4-5} \cmidrule(lr){6-7} \cmidrule(lr){8-9}
          & Error & Order & Error & Order & Error & Order  & Error & Order \\
        \midrule  
 10 & 6.916137e-05& --&3.238976e-05 &-- & 7.024289e-05  &-- & 9.102354e-07   &-- \\
        20 & 8.770902e-06 & 2.9792& 2.286074e-06 & 3.8246&8.622116e-06 & 3.0262&   7.556264e-08   & 3.5905\\
        40  & 1.099991e-06 & 2.9952&1.427746e-07 &4.0011 &1.063681e-06   &3.0190 &  5.461295e-09 & 3.7904   \\
        80  & 1.376718e-07 & 2.9982&8.792510e-09 &  4.0213 & 1.320313e-07   &3.0101 &  3.659935e-10& 3.8994 \\
        160 & 1.721441e-08 &2.9995 &5.430785e-10   &  4.0170 &1.644235e-08   &3.0054 & 2.366685e-11 & 3.9509 \\
        320 &  2.152050e-09 &2.9998 &3.370415e-11   &4.0102  & 2.051421e-09& 3.0027 &  1.504130e-12   &3.9759 \\
\bottomrule
    \end{tabular}
\end{table}
\begin{table}[htbp!]
    \centering
    \caption{Global error and order of accuracy for example 6.3 by various MQ-RK3 methods.}
    \begin{tabular}{c c c c c c c}
        \toprule
        N & \multicolumn{2}{c}{MQ-RK3 I}  & \multicolumn{2}{c}{MQ-RK3 II} & \multicolumn{2}{c}{MQ-RK3 III} \\
        \cmidrule(lr){2-3} \cmidrule(lr){4-5}  \cmidrule(lr){6-7}
          & Error & Order & Error & Order & Error & Order \\
        \midrule
        10  & 8.158208e-06   & -- &1.795212e-04 & -- & 2.711265e-05       & --    \\
        20  & 4.606486e-07    & 4.1465 &  4.752717e-06  & 5.2393&  1.480904e-06   &  4.1944 \\
        40  &2.749617e-08    &   4.0664& 1.419743e-07  & 5.0651 & 8.654651e-08   &  4.0969 \\
        80  &1.681109e-09    &   4.0317 & 5.324079e-09 &  4.7370 &  5.230366e-09  & 4.0485 \\
        160 & 1.039404e-10  &  4.0156& 2.414269e-10&   4.4629 & 3.214189e-10     & 4.0244\\
        320 &  6.463274e-12 & 4.0073 &1.257616e-11  &  4.2628 & 1.991918e-11  &  4.0122\\
        \bottomrule
    \end{tabular}
\end{table}
\section*{Example 6.4}
Explicit RBF-Runge method for scalar is extended to system case by using component wise operation. We consider the system of equations  
\begin{equation*}
\begin{aligned}
 \frac{du_1}{dt}  & =  e^t - 5u_1 + 3u_2,\\
 \frac{du_2}{dt}  & = -3u_1 + u_2,
\end{aligned}
\end{equation*}
with initial data
\begin{equation*}
     u_1(0) = 1, u_2(0) = 0.
\end{equation*}
The exact solution of the above ordinary differential equation $u
_1(t) = (1-2t)e^2t,\,\, u_2(t) = (\frac{1}{3} - 2t)e^{-2t} - \frac{1}{3}e^t$. Solution is approximated at final time $t = 5$ and shown in table 10. It is observed that truncation error appeared in RK-2 method follows second order accuracy, whereas MQ-RK2 and IMQ-RK2 attains third order accuracy. Furthermore, the error decreases as number of grid points increases.

\begin{table}[h]
    \centering
    \caption{Global error and order of accuracy for example 6.4 by RK-2, MQ-RK2, IMQ-RK2.}
    \begin{tabular}{c c c c c c c }
        \toprule
        $u_1$ & \multicolumn{2}{c}{RK 2} & \multicolumn{2}{c}{IMQ-RK2} & \multicolumn{2}{c}{MQ-RK2 } \\
        \cmidrule(lr){2-3} \cmidrule(lr){4-5} \cmidrule(lr){6-7} 
         N & Error & Order & Error & Order & Error & Order \\
        \midrule
        20 &  3.8578e-01   & --& 3.0277e-02   &-- & 4.0712e-02 &--  \\
        40 &  6.6058e-02      &   2.5460 &6.3264e-03   &2.2588 & 3.2458e-03   & 3.6488  \\
        80  &  1.3425e-02 &  2.2989 & 7.1610e-04   & 3.1431&3.4340e-04 & 3.2406 \\
        160  & 3.0077e-03 &  2.1582 & 8.2946e-05  &3.0887  &4.0364e-05 & 3.0887	 \\
        320 &   7.1054e-04 &  2.0817&  9.5653e-06 & 3.1163 &5.2049e-06  &  2.9551 \\
        640 &  1.7259e-04  &   2.0415 & 1.0391e-06  & 3.2024 &7.7303e-07 &2.7513 \\
       \midrule
        $u_2$ & \multicolumn{2}{c}{RK 2} & \multicolumn{2}{c}{IMQ-RK2} & \multicolumn{2}{c}{MQ-RK2} \\
        \cmidrule(lr){2-3} \cmidrule(lr){4-5} \cmidrule(lr){6-7} 
         N & Error & Order & Error & Order & Error & Order  \\
        \midrule  
 20 & 2.6347e-02 & --& 2.8261e-02 &-- & 3.6335e-03   &--  \\
        40 & 2.7899e-02 & -0.0826	 &  2.9749e-03   & 3.2479&1.8151e-03  & 1.0013\\
        80  &  9.0268e-03 & 1.6279 &4.6315e-04   &2.6833 &3.6812e-05  &2.7065  \\
        160  & 2.4810e-03 &1.8633&6.4613e-05  &  2.8416 & 2.7808e-04    &2.9173 \\
        320 & 6.4646e-04 &1.9403 &8.8737e-06  &  2.8642 &4.3812e-06   &3.0708  \\
        640 &   1.6478e-04  & 1.9720 &1.2748e-06     &2.7992  & 4.1112e-07& 3.4137 \\
\bottomrule
    \end{tabular}
\end{table}
\section*{Example 6.5} The Duffing equation \cite{mei2017symplectic} is given by 
\begin{align*}
    & q'' + \omega^2q = k^2\left( 2q^3 - q\right), 0 \le t \leq 20,\\
    & q(0) = 0, q'(0) = \omega, 
\end{align*}
 with $0 \leq k < \omega$. Choosing $p = q'$ and $u = ( p, q)^T$, the above-mentioned second-order differential equation reduces to a first-order differential equation system in the form $u' = f(u)$, where $f(u) = (-\omega^2 q + k^2(2q^3 - q),\, p)^T$. The analytic solution is given by $q(t) = sn(\omega, k/ \omega).$ Here, $ \omega = 10$, $k = 0.03$ and $h = 0.05$ is taken in our experimental results. From figure \ref{duff}, it is clearly visible that the exact solution blow up over time whereas, the approximate solution computed by MQ-RK2 and IMQ-RK2 method damp over time. Duffing equation can be written as Hamiltonian system, so it is structure preserving but RK2, MQ-RK2, IMQ-RK2 are not so. So, while approximating the Duffing equation it is either blowing up or damping. In our future work, we will explore the sympletic Runge-Kutta method.
\begin{figure}[H]
  \centering
  \includegraphics[trim={1cm 0 1cm 0}, clip, width=9cm, height=6cm]{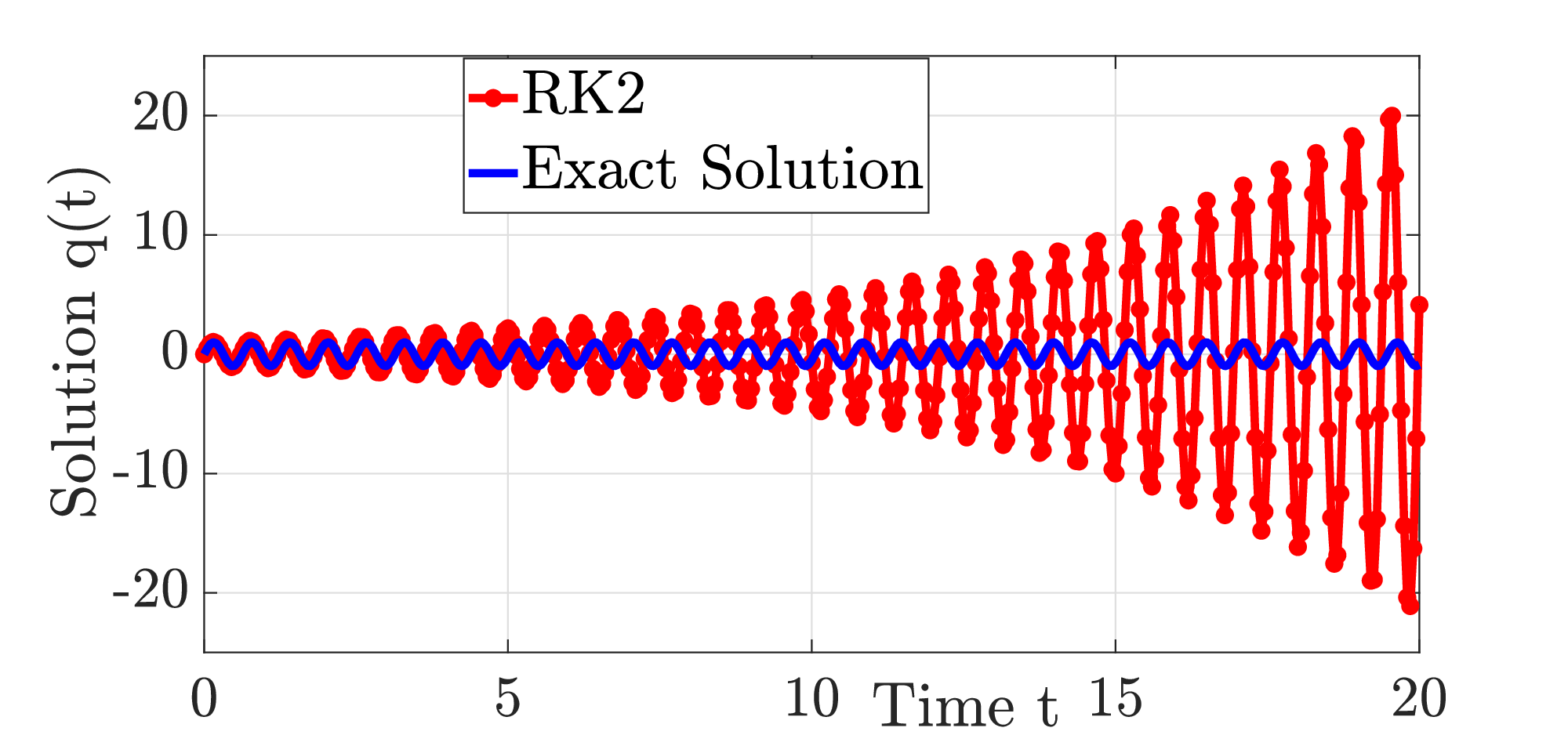}\\
  \includegraphics[trim={0 0 1.5cm 0.5cm}, clip, width=8.35cm, height=5.5cm]{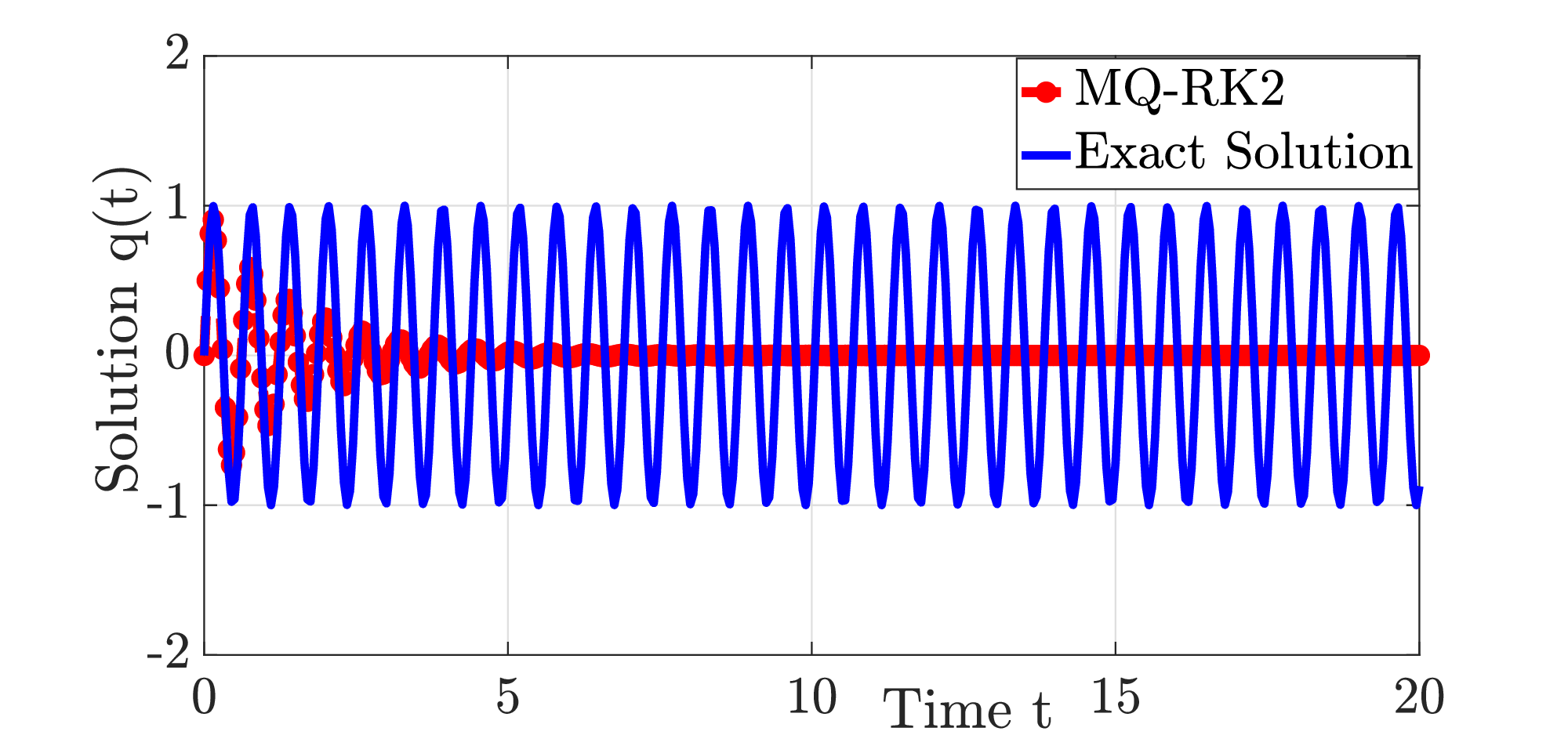}
  \includegraphics[trim={2cm 0 2.5cm 0}, clip, width=8.5cm, height=5.7cm]{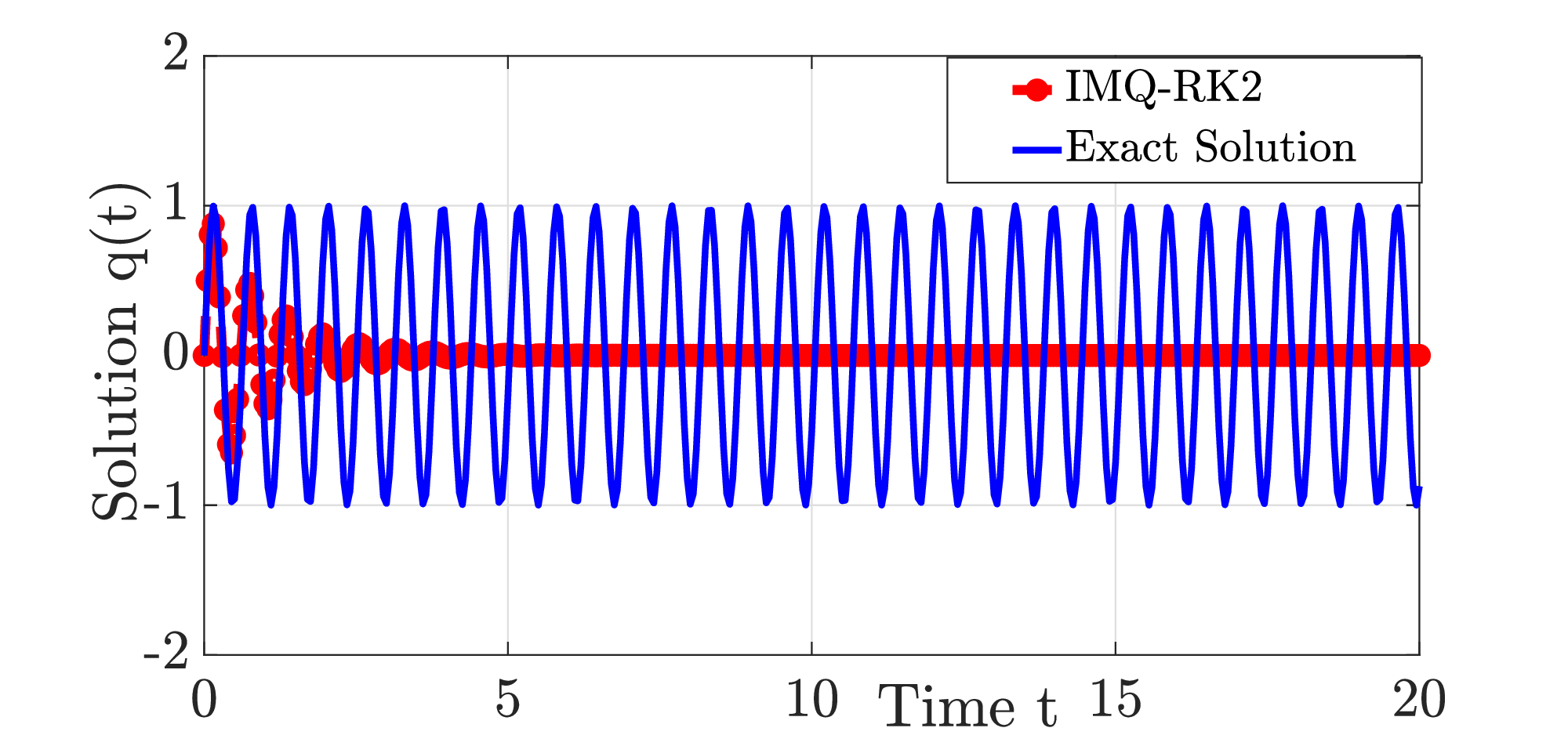}
\caption{The three graphs represents solution of example 5 where approximate solution by RK2, MQ-RK2 and IMQ-RK2 are compared with exact solution respectively at final time $t = 20$.}\label{duff}
\end{figure}
\section{Conclusions}
In this study, we have introduced MQ-RBF and IMQ-RBF-based Runge-Kutta methods, which demonstrate improved accuracy orders compared to classical Runge-Kutta schemes. A detailed analysis of the convergence behavior, stability regions, and local convergence order was performed through five numerical tests. The results indicate that although the IMQ-RBF method incurs higher computational complexity, it consistently outperforms the MQ-RBF method in terms of both numerical error and convergence order. Furthermore, implementing fourth-stage, fifth-order Runge-Kutta methods with MQ and IMQ radial basis functions (RBFs) on standard CPU architectures presents significant memory challenges, complicating the search for optimal shape parameters. To address these limitations, the development of memory-efficient MQ and IMQ RBF-based fourth-order, five-stage Runge-Kutta methods is currently underway and will be presented in future work.\\
\newline
\noindent{\bf Acknowledgments }
The author Shipra Mahata is supported by UGC, India (NTA
Ref No. 211610046691).\\
\bibliographystyle{elsarticle-num}  
\bibliography{references}
\end{document}